\documentclass[11pt,letterpaper]{article}

\usepackage{amsmath,amssymb,amsthm,mathtools}
\usepackage[margin=0.8in,a4paper]{geometry}    
\usepackage{cite} 
\usepackage{caption}
\usepackage{float}
\usepackage{tikz}
\usepackage{authblk}
\usepackage[colorlinks=true,linkcolor=blue,citecolor=blue,urlcolor=blue]{hyperref}
\usepackage{parskip}
\usepackage{enumitem}
 \usepackage{booktabs}
\usepackage{csquotes}
 \usepackage{lmodern}
\usepackage{orcidlink}
\usepackage{xcolor}
\usepackage{tabularx}
 \usetikzlibrary{arrows.meta, positioning, fit}
\usepackage{longtable}

\theoremstyle{plain}
\newtheorem{theorem}{Theorem}
\newtheorem{corollary}{Corollary}
\newtheorem{lemma}{Lemma}
\newtheorem{proposition}{Proposition}

\theoremstyle{definition}
\newtheorem{definition}{Definition}
\newtheorem{example}{Example}

\theoremstyle{remark}
\newtheorem{remark}{Remark}

\author[1]{El-Mehdi Mehiri\thanks{Email: \url{elmehdi.mehiri@emse.fr} }}
\author[1]{Nabil Absi\thanks{Email: \url{absi@emse.fr}}}
\author[1]{Elodie Suzanne\thanks{Email:   \url{elodie.suzanne@emse.fr}}}

\affil[1]{Mines Saint-Etienne, CMP, Department of Manufacturing Sciences and Logistics, F-13120 Gardanne, France.}

\title{\bf  Grid Theory and Polynomiality in Dynamic Lot-Sizing}
\date{\today}

\begin{document}
\maketitle

\begin{abstract}
\noindent Why are some dynamic lot-sizing problems polynomial? We address this question by introducing \emph{Grid Theory}, a structural framework based on cumulative production and the additive structure of production
bounds. For a general single-item dynamic lot-sizing model with lower and upper production bounds, there exists an optimal extreme solution in which, within each regeneration interval, all but at most one production quantity lie on a boundary value. This induces additive grids, and the \textit{Main Grid Theorem} establishes that an optimal
cumulative production trajectory can be restricted to these discrete sets. Although the resulting grids may be exponentially large, we introduce the notion of \emph{additive dimension} to capture production-bound profiles whose boundary sums admit a low-dimensional representation. We show that bounded additive dimension yields a polynomially constructible grid envelope and a polynomial time grid-based dynamic programming algorithm. The framework extends to separable concave costs and establishes polynomial solvability of several families, including constant capacities, minimum order quantities, a fixed number of capacity levels, fixed-degree polynomial capacities, periodic capacities, and piecewise polynomial capacities. In particular, polynomiality may hold even when the number of distinct capacity values grows with the planning horizon. Grid Theory thus identifies additive structure, rather than the number of distinct resource values, as a sufficient mechanism for polynomial solvability.
\end{abstract}

\medskip

\noindent\textbf{Keywords:}
Dynamic lot-sizing; Grid Theory; Additive grid; Additive dimension; Polynomial solvability; Concave costs; Dynamic programming; Production bounds.

\section{Introduction}
\label{sec:introduction}

Dynamic lot-sizing is one of the classical problems of production planning in which structural properties and computational complexity are tightly linked. Since the seminal work of Wagner and Whitin \cite{WagnerWhitin1958}, a central question has been to characterize the production and inventory trajectories that can arise in an optimal solution and to exploit this structure in the design of efficient algorithms. The resulting literature contains both polynomially solvable subclasses and computationally difficult variants. For a comprehensive survey of single-item dynamic lot-sizing models and their extensions, we refer to Brahimi et al.~\cite{BrahimiEtAl2017}.

Production restrictions are a major source of this complexity. Florian, Lenstra, and Rinnooy Kan \cite{FlorianLenstraRinnooyKan1980} established NP-hardness for restricted deterministic production planning problems and contrasted these cases with polynomially solvable capacity structures. Bitran and Yanasse \cite{BitranYanasse1982} subsequently developed a detailed complexity classification for the single-item capacitated lot-sizing problem. At the same time, important structured subclasses are
polynomially solvable. Florian and Klein \cite{FlorianKlein1971} exploited the extreme point structure of capacitated production plans and obtained an $O(T^4)$ algorithm for
the constant-capacity problem with concave costs. Van Hoesel and Wagelmans \cite{vanHoeselWagelmans1996} subsequently obtained an $O(T^3)$ algorithm for constant capacity, concave production costs, and linear holding costs. Under a different structural regime, Chung and Lin \cite{ChungLin1988} derived an $O(T^2)$ algorithm when
setup and unit production costs are nonincreasing and capacities are nondecreasing. 
These results suggest that the presence of capacity constraints alone does not determine tractability. The structure of the data through which these constraints enter the problem is crucial.

A closely related stream concerns lower production bounds and minimum order quantities (MOQs), where a positive production quantity $x_t$ is required to lie between a minimum level $L_t$ and an upper bound $U_t$, with $y_t$ indicating whether production takes place. Variable lower bounds of the  form $L_ty_t\leq x_t$ have been studied from a polyhedral perspective by Constantino \cite{Constantino1998}. For constant minimum order quantity and constant capacity, Okhrin and Richter \cite{OkhrinRichter2011a} developed an $O(T^3)$ algorithm under a restricted cost structure. Hellion, Mangione, and Penz \cite{HellionMangionePenz2012,HellionMangionePenz2013} considered  constant MOQ and capacity under concave production and inventory costs; the corrected
complexity of their algorithm is $O(T^6)$. Park and Klabjan \cite{ParkKlabjan2015} study the more general feasible structure $x_t\in\{0\}\cup[L_t,U_t]$ and establish, among other results, polynomial solvability when the upper bound is constant and the lower bounds and relevant costs are nonincreasing.

The structural foundations of these algorithms are equally important. Florian and Klein \cite{FlorianKlein1971} showed that, within a regeneration structure, an extreme production plan has at most one fractional production period. Swoveland \cite{Swoveland1975} extended this type of result to piecewise concave production and holding-backorder costs, with breakpoints replacing the classical capacity boundaries. For MOQ models, analogous boundary structures appear in Hellion, Mangione, and Penz \cite{HellionMangionePenz2012} and Park and Klabjan \cite{ParkKlabjan2015}. Of particular interest for this paper, Kirca \cite{Kirca1990} exploited the classical extreme point algorithmically by constructing forward and backward sets of cumulative production levels that may occur in an optimal solution and then performing dynamic programming over those levels.


This literature leaves, however, a distinct structural question. A discrete set of candidate cumulative production levels needs to be small. For arbitrary capacity profiles, the number of such levels can grow exponentially with the planning horizon. Conversely, several different resource profiles lead to polynomial algorithms. Existing approaches explain these cases through specialized structural arguments or, more generally, through a fixed collection of special replenishment quantities \cite{LiLi2016}. What is less explicit is a common state-space principle explaining when the cumulative production levels generated by the boundary structure themselves admit a polynomial representation. This motivates the question studied in this paper:

\begin{quote}
\centering
    \textit{Why does the structure of production boundaries induce a polynomially representable cumulative production state space?}
\end{quote}

 
We address this question through what we call \textit{Grid Theory}. The framework separates two issues that are often linked in dynamic programming analyses. The first is an \textit{optimality} issue: which cumulative production levels are sufficient to contain an optimal trajectory? The second is a \textit{compression} issue: when can those levels be represented by only polynomially many states? The first leads to an additive grid theorem; the second leads to a polynomiality theorem based on the additive complexity of the production boundaries.


Our starting point is cumulative production.  In the original lot-sizing formulation, cumulative production are continuous variables and may, a priori, take infinitely many values. Dynamic programming over cumulative production therefore appears to require a continuous state space.

We show that optimality imposes a much stronger structure. Between two consecutive zero-inventory periods, an optimal extreme solution has all but at most one production quantity on a production boundary. In the classical capacitated problem, these boundaries are $0$ and the capacity $C_t$. With a minimum order quantity, lower and upper
production boundaries both appear.

This extreme point property has a striking consequence. Before the possibly interior production period, cumulative production can be constructed by starting from an earlier cumulative demand level and adding production boundaries. After that period, the same cumulative trajectory can be reconstructed backward from a future cumulative demand level by subtracting production boundaries. The interior production quantity itself never needs to be used as a generator.

We call the resulting sets of cumulative production values \emph{additive grids}. Our main structural result, the \emph{Main Grid Theorem}, proves that there exists an optimal solution whose complete cumulative production trajectory belongs to these grids. Thus, optimality reveals a hidden discrete structure inside an apparently continuous optimization problem.

This grid property alone does not imply polynomial solvability. Additive grids may themselves contain exponentially many points. For example, capacities given by distinct powers of two generate exponentially many different subset sums. The relevant question is therefore not only whether an optimal solution belongs to an additive grid, but whether that grid admits a compact representation.

To answer this second question, we introduce the notion of \emph{additive dimension}. Informally, additive dimension measures how many aggregate coordinates are needed to describe the resource quantities that generate the grid.  

This gives the second main result of the paper, the \emph{General Polynomiality Theorem}: bounded additive dimension implies the existence of a polynomially constructible additive grid envelope and, consequently, a polynomial time grid-based dynamic programming algorithm.

This perspective changes the role of dynamic programming in the analysis. The dynamic program is not the source of polynomiality. Once a polynomial additive grid has been identified, the dynamic program is essentially immediate. The substantive question is instead why the continuous cumulative production space can be compressed into a polynomial number of states.

The framework provides a common interpretation of several structured resource profiles, including constant, finite-level, polynomial, periodic, and piecewise polynomial capacities. It also naturally incorporates minimum order quantities. More generally, Grid Theory depends on the additive structure of production boundaries rather than on capacity alone.

This leads to the following interpretation of the contribution. Grid Theory is not primarily an algorithm for the capacitated lot-sizing problem. It is a framework for identifying when the geometry of optimal solutions produces a discrete cumulative state space, and when the additive complexity of that state space is sufficiently small to
yield polynomial solvability.

More specifically, the paper makes the following contributions.
\begin{enumerate}
\item \textbf{Boundary structure.} Building on the classical fractional period structure, we formulate an extreme point boundary results for a single-item dynamic lot-sizing model with arbitrary period-dependent lower and upper production bounds. In every regeneration interval, all but at most one production quantity can be chosen on a production bound.

\item \textbf{Additive cumulative production grids.} We introduce positive and negative additive grids and prove the Main Grid Theorem: there exists an optimal solution whose entire cumulative production trajectory lies on these grids. This extends the cumulative-state construction to arbitrary period-dependent lower and upper boundaries and isolates their additive structure explicitly. 

\item \textbf{Grid-based optimization.} We derive a grid-based dynamic programming algorithm and an equivalent layered shortest path model. Once the grid is sorted, the dynamic program can be implemented in time linear in the grid cardinality per period.

\item \textbf{Additive dimension and polynomiality.} We introduce additive dimension and prove the General Polynomiality Theorem. Unlike the finite set of special quantities frameworks, bounded additive dimension does not require the number of distinct production boundaries to remain fixed. It therefore yields polynomial algorithms for resource profiles with a growing number of distinct values whenever their additive combinations admit a fixed-dimensional representation.

\item \textbf{Unified and broader polynomial families.} We recover several known polynomial families within a common
      additive state framework and derive polynomial algorithms for broader structured capacity profiles. In particular, fixed-degree polynomial and piecewise polynomial capacities may contain a number of distinct capacity values that grows with the planning horizon while still generating polynomial additive grids.
\end{enumerate}

The remainder of the paper is organized as follows.
Section~\ref{sec:structural} develops the structural foundations,
including the cumulative production formulation, regeneration
intervals, and the extreme point boundary theorem.
Section~\ref{sec:grid-theory} introduces positive and negative additive
grids and proves the Main Grid Theorem.
Section~\ref{sec:grid-dp} presents the Grid Dynamic Programming
algorithm, its shortest path interpretation, and its complexity.
Section~\ref{sec:polynomial-grid} develops Polynomial Grid Theory,
introduces additive dimension, and establishes the General
Polynomiality Theorem.
Section~\ref{sec:polynomial-families} applies the theory to several
structured production resource families. Section~\ref{sec:discussion} discusses the scope and limitations of
the framework and identifies directions for its extension. 
Finally, Section~\ref{sec:conclusion} concludes the paper and outlines
directions for further research.

 \section{Structural Foundations}
\label{sec:structural}

This section develops the structural properties underlying the grid representation introduced in Section~\ref{sec:grid-theory}. We consider a single-item dynamic lot-sizing model in which positive production in period $t$ is restricted by a lower
and an upper production bound. The model contains the classical capacitated lot-sizing problem and the minimum-order-quantity variant as special cases.

The main purpose of this section is to recall the extreme point structure that will later explain why cumulative production can be restricted to a discrete set of values.

\subsection{A boundary-constrained dynamic lot-sizing model}
\label{subsec:model}

Let $\mathcal{T}:=\{1,\ldots,T\}$  be a finite planning horizon. For every period $t\in\mathcal{T}$, let $d_t\geq 0$ denote the deterministic demand, $f_t\geq 0$ the setup cost, $p_t\geq 0$ the unit production cost, and $h_t\geq 0$ the unit inventory holding cost.

Positive production in period $t$ is subject to two nonnegative parameters
$L_t$ and $U_t$, satisfying $0\leq L_t\leq U_t$.  If production is activated in period $t$, the production quantity must lie between $L_t$ and $U_t$.

For each $t\in\mathcal{T}$, let $x_t\geq 0$ denote the production quantity, $I_t\geq 0$ the inventory level at the end of the period, and $y_t\in\{0,1\}$ the setup decision. We assume throughout that the instance is feasible. The problem considered is denoted by $\mathcal P$, and is given by 
\begin{align}
\min \quad &
\sum_{t=1}^{T}
\left(
f_t y_t+p_t x_t+h_t I_t
\right)\label{problem:objective}
\\
\text{\rm s.t.}\quad
&
I_t=I_{t-1}+x_t-d_t,
&& t\in\mathcal{T},\label{problem:balance}
\\
&
L_t y_t\leq x_t\leq U_t y_t,
&& t\in\mathcal{T},\label{problem:boundaries}
\\
&
I_t\geq 0,\quad x_t\geq 0,
&& t\in\mathcal{T},\label{problem:continuous}
\\
&
y_t\in\{0,1\},
&& t\in\mathcal{T},\label{problem:binary}
\\
&
I_0=I_T=0.\label{problem:zero-inventory}
\end{align} 
The inventory balance constraints~\eqref{problem:balance}, together with
\(I_t\geq 0\), ensure that demand is satisfied without backlogging. The
setup  constraints~\eqref{problem:boundaries} impose that production can
occur only if a setup is performed and that the production quantity is between the lower bound \(L_t\) and the upper bound \(U_t\). The   constraints~\eqref{problem:zero-inventory} impose zero initial and terminal inventory.

Model~$\mathcal P$ contains several standard lot-sizing structures. In particular, setting $L_t=0$ and $U_t=C_t$, gives the classical single-item capacitated lot-sizing problem with time-dependent capacity $C_t$. A positive value $L_t>0$ introduces a minimum order quantity in period $t$. Thus, both upper and lower production restrictions can be studied within the same framework.

\subsection{Cumulative production formulation}
\label{subsec:cumulative}

For $t\in\mathcal{T}\cup\{0\}$, define the cumulative demand $D_t:=\sum_{i=1}^{t}d_i$ and the cumulative production $P_t:=\sum_{i=1}^{t}x_i$ with $D_0=P_0=0$.

Summing the inventory balance equations~\eqref{problem:balance} from period $1$ to period $t$ gives
\begin{equation}
I_t=P_t-D_t,
\qquad
t\in\mathcal{T}\cup\{0\}.
\label{eq:inventory-cumulative}
\end{equation}
Consequently, the no-backlogging constraints are equivalent to
\begin{equation*}
P_t\geq D_t,
\qquad
t\in\mathcal{T}, 
\end{equation*}
while the terminal condition $I_T=0$ becomes $P_T=D_T$. Moreover,
\begin{equation*}
x_t=P_t-P_{t-1},
\qquad
t\in\mathcal{T}. 
\end{equation*}
so that production bound constraints~\eqref{problem:boundaries} can be written as
\begin{equation*}
L_t y_t
\leq
P_t-P_{t-1}
\leq
U_t y_t,
\qquad
t\in\mathcal{T}. 
\end{equation*}

Thus, once the setup decisions are fixed, production and inventory decisions can be described entirely through the cumulative production trajectory $(P_0,P_1,\ldots,P_T)$. The structural question that will drive the remainder of the paper is therefore: \textit{which values of $P_t$ need to be represented in order to preserve at least one optimal trajectory?}

\subsection{Regeneration intervals}
\label{subsec:regeneration}

A period $t\in\mathcal{T}\cup\{0\}$ is called a \emph{zero-inventory period} if $I_t=0$, or, equivalently by~\eqref{eq:inventory-cumulative}, $P_t=D_t$. Since $I_0=I_T=0$, periods $0$ and $T$ are always zero-inventory periods.

Let $\ell<r$ be two consecutive zero-inventory periods. Then $P_\ell=D_\ell$,   $P_r=D_r$, and $P_t>D_t$ for all $t=\ell+1,\ldots,r-1$. We call $(\ell,r] = \{\ell+1,\ldots,r\}$ a \emph{regeneration interval}.

Within a regeneration interval, the total quantity produced is fixed:
\begin{equation*}
\sum_{t=\ell+1}^{r}x_t
=
D_r-D_\ell. 
\end{equation*}
The production quantities may nevertheless be redistributed among the
periods of the interval, provided that the production bounds and the
strictly positive intermediate inventories remain feasible. This local freedom is precisely what yields the extreme point property recalled in the following.

\subsection{Extreme point structure}
\label{subsec:extreme point}

Fix a setup vector $y\in\{0,1\}^{T}$.  Once $y$ is fixed, the production decisions belong to the bounded polytope 
\begin{equation}
\mathcal{X}(y)
:=
\left\{
x\in\mathbb{R}^{T}: L_t y_t\leq x_t\leq U_t y_t,\ \sum_{u=1}^{t}x_u\geq D_t,\ \sum_{u=1}^{T}x_u=D_T,\ t\in\mathcal{T}\right\}.
\label{eq:fixed-setup-polytope}
\end{equation}
For a fixed setup vector $y$, a production period $t$ is said to be
\emph{partially utilized} if
\begin{equation*}
L_t y_t<x_t<U_t y_t. 
\end{equation*}
Otherwise, $x_t$ lies on one of its production bounds.

The following theorem recalls the fundamental structural property used throughout the paper.

\begin{theorem} 
\label{thm:extreme point}
Let $y\in\{0,1\}^{T}$ and let $x$ be an extreme point of $\mathcal{X}(y)$. Then every regeneration interval associated with $x$ contains at most one partially utilized production period.

In other words, except for at most one period in each regeneration interval, every production quantity belongs to $\mathcal{B}_t:=\{0,L_t,U_t\}$.
\end{theorem}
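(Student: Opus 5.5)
We argue by contradiction with the standard perturbation argument for extreme points. Suppose $x$ is an extreme point of $\mathcal{X}(y)$ and some regeneration interval $(\ell,r]$ of $x$ contains two partially utilized periods $s<s'$. Then $L_s y_s<x_s<U_s y_s$ and $L_{s'}y_{s'}<x_{s'}<U_{s'}y_{s'}$, which forces $y_s=y_{s'}=1$. We will exhibit a nonzero direction $e$ such that both $x+\varepsilon e$ and $x-\varepsilon e$ lie in $\mathcal{X}(y)$ for small $\varepsilon>0$. Then $x$ is the midpoint of two distinct feasible points, contradicting extremality.

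The direction is $e:=\mathbf{1}_s-\mathbf{1}_{s'}$, which shifts a small amount of production from period $s'$ to period $s$ or the reverse. We check the constraints of~\eqref{eq:fixed-setup-polytope} one family at a time.
\begin{enumerate}
\item \emph{Bound constraints.} Only coordinates $s$ and $s'$ change, and both lie strictly inside their intervals $[L_ty_t,U_ty_t]$. So for
\[
\varepsilon\le \min\{x_s-L_s,\;U_s-x_s,\;x_{s'}-L_{s'},\;U_{s'}-x_{s'}\}
\]
the bounds remain satisfied.
\item \emph{Total production.} The total $\sum_u x_u$ is unchanged because $e$ has zero sum.
\item \emph{Cumulative production.} The partial sums $P_t=\sum_{u\le t}x_u$ change by $\pm\varepsilon$ exactly for $s\le t<s'$ and are unchanged otherwise.
\end{enumerate}

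The key step is to verify that the cumulative constraints $P_t\ge D_t$ survive for $t\in[s,s')$. Since $\ell<s<s'\le r$, every such $t$ satisfies $\ell+1\le t\le r-1$. By definition of consecutive zero-inventory periods, $P_t>D_t$ strictly there. The range is finite, so $\delta:=\min_{s\le t<s'}(P_t-D_t)>0$. Taking $\varepsilon\le\delta$ in addition preserves all these constraints in both directions.

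This step is the only real obstacle, and it is exactly where the regeneration interval structure is used. If $s$ and $s'$ were separated by a zero-inventory period $t$, then $P_t=D_t$, and the perturbation that lowers $P_t$ would become infeasible. That is why the bound is "at most one per regeneration interval" and not "at most one overall."

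Finally, $e\neq 0$, so $x\pm\varepsilon e$ are two distinct points of $\mathcal{X}(y)$ with midpoint $x$. This contradicts $x$ being an extreme point. The reformulation in the statement follows directly: a period that is not partially utilized has $x_t\in\{L_ty_t,U_ty_t\}$. This set equals $\{0\}$ when $y_t=0$ and $\{L_t,U_t\}$ when $y_t=1$, so in either case $x_t\in\mathcal{B}_t$.
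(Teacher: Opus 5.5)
Your proof is correct and matches the paper's argument: the same two-period exchange $x_s+\theta$, $x_{s'}-\theta$, with the shifted cumulative constraints for $s\le t<s'$ kept feasible by the strict inequalities $P_t>D_t$ inside the regeneration interval. You are more explicit than the paper about the choice of $\varepsilon$ and the final step $x_t\in\{L_ty_t,U_ty_t\}\subseteq\mathcal{B}_t$, but the approach is the same.
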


\begin{proof}
Fix an arbitrary setup vector
$y\in\{0,1\}^{T}$ and an extreme point
$x\in X(y)$.

Consider a regeneration interval $(\ell,r]$
associated with $x$. Suppose, by contradiction,
that two distinct periods $i$ and $j$, with
$\ell<i<j\le r$, are partially utilized. Thus, $L_i y_i < x_i < U_i y_i$ and  $L_j y_j < x_j < U_j y_j$.

For a scalar $\theta$, define the perturbed
production vector $x(\theta)$ by $x_i(\theta)=x_i+\theta$ and $x_j(\theta)=x_j-\theta$,  leaving all other production quantities unchanged.

This perturbation preserves total production
and leaves cumulative production unchanged from
period $j$ onward. For $t=i,\ldots,j-1$,
cumulative production becomes $P_t(\theta)=P_t+\theta$.

Since these periods lie strictly inside the
regeneration interval, we have $P_t>D_t$.
Moreover, both $x_i$ and $x_j$ lie strictly
between their respective production bounds.

Consequently, there exists $\varepsilon>0$
such that both $x(\varepsilon)$ and
$x(-\varepsilon)$ belong to $X(y)$.

These vectors are distinct, and $x=\frac12 x(\varepsilon)
  +\frac12 x(-\varepsilon)$, contradicting the assumption that $x$ is an
extreme point of $X(y)$.

Therefore, every regeneration interval contains
at most one partially utilized production period.

All remaining production quantities lie on their
respective bounds, so that $x_t\in\{0,L_t,U_t\}$. This completes the proof.
\end{proof}

Theorem~\ref{thm:extreme point} shows that the apparent continuous freedom of the production problem is highly structured. Within each regeneration interval, at most one production quantity needs to take a genuinely interior value. All other production quantities can be chosen from the finite boundary sets $\mathcal{B}_t$.

The boundary property does not require any regularity in the sequences $(L_t)_{t=1}^{T}$ and $(U_t)_{t=1}^{T}$. The bounds may be constant, time-dependent, periodic, polynomial, or irregular profiles. No monotonicity, stationarity, or fixed-number-of-levels assumption is needed. Such assumptions will matter only later, when we ask whether the additive state sets generated by these boundaries have polynomial cardinality.
 
Theorem~\ref{thm:extreme point} belongs to a classical line of structural properties for dynamic lot-sizing. In the capacity setting, Florian and Klein \cite{FlorianKlein1971} established that an extreme production sequence contains at most one period whose positive production is strictly below capacity. Swoveland \cite{Swoveland1975} extended this principle to piecewise concave production and holding-backorder costs, using production and inventory breakpoints. For constant lower and upper production bounds, Hellion, Mangione, and Penz \cite{HellionMangionePenz2012} introduced $UL$-capacity constrained sequences, in which production belongs to $\{0,L,U\}$ except in at most one fractional period. Park and Klabjan \cite{ParkKlabjan2015} developed related structures for time-dependent minimum order quantities.

Accordingly, the novelty here is not the fractional-period principle itself. The role of Theorem~\ref{thm:extreme point} is to formulate the required geometric statement for the boundary-constrained model with arbitrary period-dependent $L_t$ and $U_t$, and thereby to identify the finite sets $\mathcal{B}_t=\{0,L_t,U_t\}$ that generate the cumulative production structure studied next.

Section~\ref{sec:grid-theory} translates this local extreme point property into a global statement in cumulative production space. The key observation will be that, on either side of the unique possible partially utilized period of a regeneration interval, cumulative production is obtained from a zero-inventory by adding or subtracting elements of the boundary sets $\mathcal{B}_t$.

\section{Grid Theory}
\label{sec:grid-theory}

The extreme point structure established in Theorem~\ref{thm:extreme point} has an important consequence in cumulative production space. Within every regeneration interval, all but at most one production quantity belong to the finite boundary sets $\mathcal{B}_t$.  As a result, cumulative production can be represented, on either side
of the possibly partially utilized period, by starting from a cumulative demand level and adding or subtracting production boundaries.

This section formalizes that observation. We first define the additive sets generated by the period specific boundaries and use them to construct forward and backward cumulative production grids. We then prove the Main Grid Theorem, which establishes that these grids contain the complete cumulative production trajectory of at least one optimal solution. Finally, we show that the optimality-preserving grid may have an exponential cardinality. This distinction between \textit{grid existence} and \textit{grid compressibility} is the key step toward the polynomiality theory developed later.

\subsection{Boundary sums and additive grids}
\label{subsec:additive grids}

For a given interval of periods $J\subseteq\mathcal{T}$, define the associated \emph{boundary sum set} by
\begin{equation}
\mathcal{A}(J)
:=
\left\{
\sum_{u\in J} b_u
:
b_u\in\mathcal{B}_u
\text{ for every }u\in J
\right\}.
\label{eq:boundary-sum-set}
\end{equation}
For the empty interval, we adopt the convention $\mathcal{A}(\varnothing):=\{0\}$.  

Thus, $\mathcal{A}(J)$ contains all quantities that can be obtained by selecting, independently for every period $u\in J$, one of the boundary values $0$,  $L_u$, $U_u$, and adding the resulting values.

The relevance of ~\eqref{eq:boundary-sum-set} follows from Theorem~\ref{thm:extreme point}. If a portion of a regeneration interval does not contain its partially utilized production period, then every production quantity in that portion belongs to its
corresponding boundary set. Hence, the change in cumulative production over that portion belongs to an appropriate boundary sum set.

The cumulative demand levels $D_0,D_1,\ldots,D_T$ play the role of \emph{anchors}. Starting from one such anchor and adding or subtracting a boundary sum generates a candidate cumulative production level.

This motivates the following two constructions.

\subsection{Positive and negative grids}
\label{subsec:positive-negative-grids}

Consider a period $t\in\mathcal{T}\cup\{0\}$. For every earlier cumulative demand anchor $D_j$, with $0\leq j\leq t$, production between periods $j+1$ and $t$ may generate cumulative production levels of the form $D_j+a$ with $a\in\mathcal{A}(\{j+1,\ldots,t\})$. We therefore define the \emph{positive grid} at period $t$ by
\begin{equation}
G_t^{+}
:=
\bigcup_{j=0}^{t}
\left(
D_j+
\mathcal{A}(\{j+1,\ldots,t\})
\right),
\label{eq:positive-grid}
\end{equation}
where, for a scalar $D_j$ and a set $A$, $D_j+A:=\{D_j+a:a\in A\}$.

Equivalently,
\begin{equation}
G_t^{+}
=
\left\{
D_j+
\sum_{u=j+1}^{t}b_u
:
0\leq j\leq t,\;
b_u\in\mathcal{B}_u
\right\}. 
\label{eq:positive-grid-expanded}
\end{equation}
A point in $G_t^{+}$ therefore admits a \emph{forward representation}: cumulative production is anchored at an earlier cumulative-demand level $D_j$ and reconstructed by adding production-boundary quantities from periods $j+1,\ldots,t$.


Symmetrically, define the \emph{negative grid} at period $t$ by:


\begin{equation}
G_t^{-}
:=
\bigcup_{j=t}^{T}
\left(
D_j-
\mathcal{A}(\{t+1,\ldots,j\})
\right), 
\label{eq:negative-grid}
\end{equation}
where $D_j-A:=\{D_j-a:a\in A\}$.

Equivalently,
\begin{equation}
G_t^{-}
=
\left\{
D_j-
\sum_{u=t+1}^{j}b_u
:
t\leq j\leq T,\;
b_u\in\mathcal{B}_u
\right\}. 
\label{eq:negative-grid-expanded}
\end{equation} 
A point in $G_t^{-}$ thus admits a \emph{backward representation}: cumulative production is anchored at a future cumulative-demand level $D_j$ and reconstructed by subtracting production-boundary quantities from periods $t+1,\ldots,j$.

The two constructions are combined in the \emph{period-$t$ additive grid} by
\begin{equation}
G_t
:=
\left(
G_t^{+}\cup G_t^{-}
\right)
\cap [D_t,D_T].
\label{eq:period-grid}
\end{equation}

The interval restriction retains only values that can be cumulative production states of a feasible solution at period $t$, since every feasible trajectory satisfies $D_t\leq P_t\leq D_T$.
 
We also define the \emph{global additive grid}
\begin{equation}
G
:=
\bigcup_{t=0}^{T}G_t.
\label{eq:global-grid}
\end{equation}
The period-dependent grids preserve more temporal information than the global grid. In particular, $G_t^{+}$ only uses production boundaries from periods preceding $t$, whereas $G_t^{-}$ only uses production boundaries from periods following $t$. This temporal structure will be useful both in the proof below and in the dynamic program of Section \ref{sec:grid-dp}.

\subsection{The Main Grid Theorem}
\label{subsec:main-grid-theorem}

We can now state the central structural result, which we call, the \textit{Main Grid Theorem} (MGT). 

\begin{theorem} 
\label{thm:main-grid}
There exists an optimal solution of problem~$\mathcal P$ whose cumulative production trajectory satisfies $P_t\in G_t$ for all $t\in\mathcal{T}\cup\{0\}$.  
\end{theorem}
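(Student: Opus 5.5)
We first reduce to an optimal solution that is an extreme point of $\mathcal{X}(y)$, and then read off the grid membership of $P_t$ directly from Theorem~\ref{thm:extreme point}.

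\textbf{Step 1: existence of an extreme optimal solution.} Since the instance is feasible and there are finitely many setup vectors, choose an optimal setup vector $y^\ast$. For fixed $y^\ast$, the remaining problem minimizes the linear function $\sum_t p_t x_t + \sum_t h_t (P_t - D_t) + \sum_t f_t y_t^\ast$ over the bounded polytope $\mathcal{X}(y^\ast)$ in~\eqref{eq:fixed-setup-polytope}. Here we use $I_t = P_t - D_t$ from~\eqref{eq:inventory-cumulative}. A linear function attains its minimum over a nonempty polytope at an extreme point, so some extreme point $x^\ast$ of $\mathcal{X}(y^\ast)$ is optimal together with $y^\ast$. (If $x^\ast_t = 0$ while $y^\ast_t = 1$, the pair is still feasible and optimal, so nothing further is needed.) By Theorem~\ref{thm:extreme point}, each regeneration interval of $x^\ast$ contains at most one partially utilized period, and every other period has $x^\ast_t \in \mathcal{B}_t$. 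For $y^\ast_t = 0$ we have $x^\ast_t = 0 \in \mathcal{B}_t$; for $y^\ast_t = 1$, "on a bound" means $x^\ast_t \in \{L_t, U_t\}$.

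\textbf{Step 2: locate each $P_t$.} Fix $t \in \mathcal{T}\cup\{0\}$ and let $(\ell, r]$ be the regeneration interval with $\ell \le t \le r$; if $t$ is itself a zero-inventory period, take $\ell = t$. We have $D_t \le P_t \le D_T$ by feasibility, so only membership in $G_t^+ \cup G_t^-$ needs checking. Let $s$ be the partially utilized period of $(\ell, r]$ if it exists, and set $s := r+1$ otherwise.
\begin{itemize}
\item If $t < s$, every period $u \in \{\ell+1, \ldots, t\}$ has $x^\ast_u \in \mathcal{B}_u$. Hence $P_t = D_\ell + \sum_{u=\ell+1}^t x^\ast_u \in D_\ell + \mathcal{A}(\{\ell+1, \ldots, t\}) \subseteq G_t^+$, taking $j = \ell$ in~\eqref{eq:positive-grid}.
\item If $t \ge s$, every period $u \in \{t+1, \ldots, r\}$ has $x^\ast_u \in \mathcal{B}_u$, since $s \le t$. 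Using $P_r = D_r$ gives $P_t = D_r - \sum_{u=t+1}^r x^\ast_u \in D_r - \mathcal{A}(\{t+1, \ldots, r\}) \subseteq G_t^-$, taking $j = r$ in~\eqref{eq:negative-grid}.
\end{itemize}
The boundary cases are covered by the empty-sum convention $\mathcal{A}(\varnothing) = \{0\}$: when $t = \ell$ or $t = r$, the anchor is $D_t$ itself. This gives $P_t \in G_t$ for all $t$.

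\textbf{Main obstacle.} The delicate point is Step 1. We must justify that the fixed-setup subproblem is linear over $\mathcal{X}(y^\ast)$, so that an extreme optimum exists, and that Theorem~\ref{thm:extreme point} applies to the regeneration intervals of that particular extreme point. The care is needed because regeneration intervals are defined from $x^\ast$ itself rather than fixed in advance; Theorem~\ref{thm:extreme point} is stated for intervals "associated with $x$", so this is consistent. Once this is settled, Step 2 is a direct bookkeeping argument: the partially utilized period is never used as a generator, because it always lies on the side of $t$ opposite to the chosen anchor.
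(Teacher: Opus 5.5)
Your proposal is correct and follows the paper's proof: you anchor forward at $D_\ell$ for periods before the partially utilized period, and backward at $D_r$ from that period on, so the interior quantity is never used as a grid generator. Your Step 1 also makes explicit the existence of an optimal extreme point of $\mathcal{X}(y^\ast)$, which the paper takes for granted when it invokes Theorem~\ref{thm:extreme point}. One cosmetic slip: your rule ``take $\ell=t$'' yields no interval when $t=T$, but there $P_T=D_T\in G_T^{+}$ with $j=T$, so the case is trivial.
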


\begin{proof}
By Theorem~\ref{thm:extreme point}, there exists an optimal solution such that every regeneration interval contains at most one partially utilized production period.

Consider such an optimal solution and an arbitrary regeneration interval $(\ell,r]$. By definition, $P_\ell=D_\ell$ and $P_r=D_r$.  Suppose first that the regeneration interval contains a partially utilized period $k$.

Consider a period $t$ satisfying $\ell\leq t<k$.  Since the unique possible partially utilized period is $k$, every production quantity in $\{\ell+1,\ldots,t\}$ belongs to its corresponding boundary set. Hence, $x_u\in\mathcal{B}_u$ for all $u=\ell+1,\ldots,t$. 
 
Using the left endpoint of the regeneration interval gives
\begin{equation*}
P_t
=
P_\ell+
\sum_{u=\ell+1}^{t}x_u
=
D_\ell+
\sum_{u=\ell+1}^{t}x_u. 
\end{equation*}
Therefore, $\sum_{u=\ell+1}^{t}x_u
\in
\mathcal{A}(\{\ell+1,\ldots,t\})$,  and consequently $P_t\in G_t^{+}$.

Now consider a period $t$ satisfying $k\leq t\leq r$. Using the right endpoint of the regeneration interval,
\begin{equation*}
P_t
=
P_r-
\sum_{u=t+1}^{r}x_u
=
D_r-
\sum_{u=t+1}^{r}x_u. 
\end{equation*}
Since $t\geq k$, the possibly partially utilized period $k$ does not appear in the sum on the right-hand side. Hence, $x_u\in\mathcal{B}_u$ for all $u=t+1,\ldots,r$,  so that $\sum_{u=t+1}^{r}x_u
\in
\mathcal{A}(\{t+1,\ldots,r\})$. It follows that $P_t\in G_t^{-}$. 
 
Suppose now that the regeneration interval contains no partially utilized production period. Then every production quantity in the interval belongs to its corresponding boundary set. For every $t\in\{\ell,\ldots,r\}$,
\begin{equation*}
P_t
=
D_\ell+
\sum_{u=\ell+1}^{t}x_u, 
\end{equation*}
and therefore $P_t\in G_t^{+}$. 
 
Thus, in all cases, $P_t\in G_t^{+}\cup G_t^{-}$. Moreover,  feasibility additionally implies $D_t\leq P_t\leq D_T$.  Hence,
\begin{equation*}
P_t\in
\left(
G_t^{+}\cup G_t^{-}
\right)
\cap[D_t,D_T]
=
G_t.
\end{equation*}

Since the argument applies to every regeneration interval, we obtain $P_t\in G_t$ for all $t\in\mathcal{T}\cup\{0\}$. 
\end{proof}

Since $G_t\subseteq G$ for every $t$,  we immediately obtain the following consequence.

\begin{corollary}
There exists an optimal solution of problem~$\mathcal P$ satisfying $P_t\in G$ for all $t\in\mathcal{T}\cup\{0\}$. 
\end{corollary}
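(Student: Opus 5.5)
The plan is to derive the corollary directly from the Main Grid Theorem (Theorem~\ref{thm:main-grid}), without returning to the extreme point structure. First, I invoke Theorem~\ref{thm:main-grid}. It supplies an optimal solution $(x,I,y)$ of problem~$\mathcal P$ whose cumulative production trajectory $(P_0,P_1,\ldots,P_T)$ satisfies $P_t\in G_t$ for every $t\in\mathcal{T}\cup\{0\}$. I keep this same solution as the witness for the corollary, so optimality needs no further argument.

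Second, I check the set inclusion $G_t\subseteq G$ for every $t\in\mathcal{T}\cup\{0\}$. By the definition~\eqref{eq:global-grid}, $G=\bigcup_{s=0}^{T}G_s$. Each $G_t$ is one of the terms of this union, so the inclusion follows immediately. Chaining the two facts gives $P_t\in G_t\subseteq G$ for all $t$, which is the claim.

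I expect no real obstacle, since all the substantive work is already contained in Theorem~\ref{thm:main-grid}. The only point needing care is that the index range in the corollary, $t\in\mathcal{T}\cup\{0\}$, matches both the range of Theorem~\ref{thm:main-grid} and the index range of the union defining $G$. Both run over $0,\ldots,T$, so the argument goes through as stated.

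It may also be worth remarking afterwards that the corollary is strictly weaker than the theorem. Membership in $G$ discards the temporal information that $G_t$ carries, namely that $G_t^{+}$ uses only boundaries of periods preceding $t$ and $G_t^{-}$ uses only boundaries of periods following $t$. This distinction is why the dynamic program of Section~\ref{sec:grid-dp} works with the period grids $G_t$ rather than the global grid $G$.
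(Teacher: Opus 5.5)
Your proof is correct and is exactly the paper's argument: the corollary follows immediately from Theorem~\ref{thm:main-grid} via $G_t\subseteq G=\bigcup_{s=0}^{T}G_s$. One caveat concerns only your closing remark: the dynamic program does not actually need the temporal information carried by $G_t$. The paper also runs it on the common grid, restricted to $[D_t,D_T]$ in period $t$ and justified by Proposition~\ref{prop:grid-enlargement}, and it is that common-grid version that underlies the polynomiality results, so the period grids are merely potentially smaller.
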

 
Theorem~\ref{thm:main-grid}, or the MGT,  converts the structural statement of
Theorem~\ref{thm:extreme point} into a global statement about optimal cumulative production trajectories. Although the production variables are continuous, at least one optimal trajectory is supported entirely on the finite sets $G_0,G_1,\ldots,G_T$.

The cumulative production point of view has an important antecedent in K{\i}rca \cite{Kirca1990}. For the capacitated single-item problem with concave costs, K{\i}rca constructs forward and backward sets of cumulative production levels from regeneration points, takes their union, and applies dynamic programming over the resulting state sets. The present construction should not be interpreted as introducing cumulative production as an optimality-preserving discrete state description.

The role of the Main Grid Theorem is more specific. First, it formulates the forward/backward representation for arbitrary period-dependent lower and upper production boundaries $\mathcal{B}_t=\{0,L_t,U_t\}$,
rather than only the classical capacity boundaries $\{0,C_t\}$. Second, and more importantly for the polynomiality analysis, equations \eqref{eq:positive-grid}-\eqref{eq:negative-grid} expose these states explicitly as cumulative-demand anchors plus or minus \emph{additive combinations of boundary values}. This representation identifies the algebraic object whose cardinality must be controlled if grid-based dynamic programming is to be polynomial.


\subsection{Interpretation: optimality versus state-space compression}
\label{subsec:grid-interpretation}

The Main Grid Theorem separates two questions that are easy to confuse.

The first is an \textit{optimality} question:

\begin{center}
\textit{Which cumulative production states are sufficient to contain at least one optimal trajectory?
}\end{center}

The MGT answers this question: the additive grids $G_t$ are sufficient.

The second is a \textit{complexity} question:

\begin{center}
\textit{How many distinct states do these grids contain, or how compactly can they be represented?}
\end{center}
The MGT alone does not answer this question.

The significance of  the MGT goes beyond the construction of a particular dynamic programming state space.



The structural mechanism behind the first answer is simple but important. A regeneration interval provides two cumulative-demand anchors, $P_\ell=D_\ell$, $P_r=D_r$.

Except for at most one period, production is selected from the boundary sets $\mathcal{B}_t$. Consequently, before the exceptional period cumulative production can be written in the forward form
\begin{equation*}
\begin{array}{ccc}
D_\ell
&
\xrightarrow{\quad +\;\text{boundaries}\quad}
&
P_t,
\end{array} 
\end{equation*}
\begin{equation*}
P_t = D_\ell + \sum_{u=\ell+1}^{t}b_u,\;b_u\in\mathcal{B}_u,
\end{equation*}

whereas at and after the exceptional period it can be written in the backward form

\begin{equation*}
\begin{array}{ccc}
P_t
&
\xleftarrow{\quad -\;\text{boundaries}\quad}
&
D_r.
\end{array} 
\end{equation*}
\begin{equation*}
P_t = D_r - \sum_{u=t+1}^{r}b_u, \; b_u\in\mathcal{B}_u.
\end{equation*}

The partially utilized production quantity is never required as a grid generator. It is bypassed by changing the direction from which cumulative production is represented. This is precisely why both positive and negative grids are needed.

This observation is central to Grid Theory. The relevant discrete structure is not imposed externally as a discretization of a continuous problem. Rather, it is generated endogenously by the regeneration anchors and the boundary structure of optimal extreme solutions.

In the classical capacitated lot-sizing problem, $L_t=0$ and $U_t=C_t$, so $\mathcal{B}_t=\{0,C_t\}$. The grid is then generated by capacity values. With an MOQ, both $L_t$ and $U_t$ become nonzero grid generators. The primitive object in the present work is therefore not "capacity" itself but the collection of period-specific boundary sets $\mathcal{B}_1,\ldots,\mathcal{B}_T$. 
 
This distinction becomes essential when studying polynomiality. because the relevant combinatorial object is \textit{the family of additive sums generated by these boundaries}.

\subsection{The additive grid can be exponential}
\label{subsec:exponential-grid}

Finiteness of the additive grid does not imply polynomial cardinality. The phenomenon already occurs in the classical capacitated lot-sizing problem.

\begin{example}
\label{ex:exponential-grid}

Consider the special case $L_t=0$ and $U_t=C_t$,  with $C_t=2^{T-t}$ for all $t\in\mathcal{T}$. Thus, $C_1=2^{T-1},\ 
C_2=2^{T-2}, 
\ldots, 
C_T=1$.

Suppose that demand occurs only in the final period and equals total
available capacity, i.e., $d_t=0$ for all $t\in\mathcal{T}-1$, and $d_T
=
\sum_{t=1}^{T}C_t
=
2^T-1$.   Then $D_0=D_1=\cdots=D_{T-1}=0$.

Consider the positive grid at period $T-1$. Since $\mathcal{B}_t=\{0,C_t\}$,  we have
\begin{equation*}
G_{T-1}^{+}
\supseteq
\left\{
\sum_{u\in S}C_u
:
S\subseteq\{1,\ldots,T-1\}
\right\}. 
\end{equation*}

Because $C_1,\ldots,C_{T-1}$ are distinct powers of two, every subset produces a different sum.
Consequently, $|G_{T-1}|
\geq
2^{T-1}$.

Hence, the additive grid may have exponential cardinality.
\end{example}

Example~\ref{ex:exponential-grid} reveals an important distinction. The MGT is an \emph{optimality theorem}: it identifies a discrete state space containing an optimal solution. It is not, by itself, a polynomiality theorem.


The computational question is therefore:

\begin{quote}
\centering
\emph{Under what structural conditions can the additive boundary sums, and hence the cumulative production grids, be represented by only polynomially many states?}
\end{quote}

This question will lead, after the  construction of the dynamic programming algorithm of the next section, to the notions of polynomial grids and additive dimension in Section \ref{sec:polynomial-grid}.

\section{Grid Dynamic Programming}
\label{sec:grid-dp}

The MGT reduces the search for an optimal solution to a
finite collection of cumulative production levels. Once the grids
$G_0,\ldots,G_T$ are known, the resulting dynamic programming
formulation is immediate: a state records the current period and the
cumulative production reached at the end of that period.

This section presents the corresponding dynamic program, its
shortest path interpretation, and an accelerated implementation whose
running time is linear in the total number of grid states, up to the
cost of sorting the grids.

We call this algorithm: the \emph{Grid Dynamic Programming} (GDP). 

\subsection{State space}
\label{subsec:grid-dp-states}

For every period $t\in\mathcal{T}\cup\{0\}$ and every
$s\in G_t$, define $F_t(s)$ as the minimum cost incurred over periods $1,\ldots,t$ among all
feasible partial production plans satisfying $P_t=s$ and $P_u\in G_u$ for all $u=0,\ldots,t$.  If no such partial plan exists, we set $F_t(s)=+\infty$.

The initial conditions are $F_0(0)=0$ and $F_0(s)=+\infty$ for all $s\in G_0\setminus\{0\}$.

A transition from state $r\in G_{t-1}$ to state $s\in G_t$
corresponds to the production quantity $x_t=s-r$.  Such a transition is feasible if either $s-r=0$, or $L_t\leq s-r\leq U_t$ where $s-r>0$.  The second condition simply expresses the fact that positive
production must lie between the lower and upper production boundaries.

For $x\geq 0$, define the one-period setup and production cost
\begin{equation*}
\phi_t(x)
:=
\begin{cases}
0,
& x=0,
\\[1mm]
f_t+p_t x,
& x>0 \text{ and } L_t\leq x\leq U_t,
\\[1mm]
+\infty,
& \text{otherwise}.
\end{cases} 
\end{equation*}

Since all setup costs are nonnegative, no setup needs to be activated
when $x_t=0$. Thus, $\phi_t$ correctly accounts for both setup and
variable production costs.

\subsection{Dynamic programming recurrence}
\label{subsec:grid-dp-recurrence}

If the cumulative production level at the end of period $t$ is $s$, then the end-of-period inventory is $I_t=s-D_t$,   and the associated holding cost is $h_t(s-D_t)$. By construction, $G_t\subseteq[D_t,D_T]$, so every state $s\in G_t$ satisfies $I_t=s-D_t\geq 0$.

For every $t\in\mathcal{T}$ and every $s\in G_t$, the dynamic
programming recurrence is therefore
\begin{equation}
F_t(s)
=
h_t(s-D_t)
+
\min_{r\in G_{t-1}}
\left\{
F_{t-1}(r)
+
\phi_t(s-r)
\right\}.
\label{eq:grid-dp-recurrence}
\end{equation}

The terminal cumulative production level is fixed by $P_T=D_T$.  Hence, the value returned by the dynamic program is $F_T(D_T)$.

Whenever a minimizing predecessor is stored for every finite state, an
optimal production plan can be recovered by tracing the predecessors
backward from $(T,D_T)$ and setting $x_t=P_t-P_{t-1}$.

The correctness of the dynamic program follows directly from the MGT.

\begin{theorem}
\label{thm:grid-dp}
The dynamic programming recurrence
\eqref{eq:grid-dp-recurrence} returns the optimal value of
problem~$\mathcal P$. 
\end{theorem}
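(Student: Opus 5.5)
The plan is to prove two inequalities, $F_T(D_T)\geq \mathrm{OPT}(\mathcal P)$ and $F_T(D_T)\leq \mathrm{OPT}(\mathcal P)$. Both rest on one bookkeeping identity. Any trajectory $(P_0,\ldots,P_T)$ with $P_t\in G_t$, $P_0=0$ and $P_T=D_T$ whose transitions are all feasible corresponds to a production plan $x_t=P_t-P_{t-1}$, $I_t=P_t-D_t$. Summing the stage costs $h_t(P_t-D_t)+\phi_t(P_t-P_{t-1})$ along such a trajectory gives exactly the objective \eqref{problem:objective} of that plan, with $y_t=1$ precisely when $x_t>0$. By induction on $t$, $F_t(s)$ defined by the recurrence \eqref{eq:grid-dp-recurrence} equals the minimum of these partial sums over grid trajectories ending at $(t,s)$. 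This matches the definition of $F_t(s)$ given in the state-space subsection.

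\textbf{Lower bound.} Take any finite value $F_T(D_T)$ and trace the minimizing predecessors back to a trajectory $P_0=0,\ldots,P_T=D_T$. Each transition has finite $\phi_t$, so either $x_t=0$ or $L_t\le x_t\le U_t$. Setting $y_t=\mathbf 1[x_t>0]$ then satisfies \eqref{problem:boundaries}. Since $G_t\subseteq[D_t,D_T]$, we get $I_t\ge 0$, and $I_0=I_T=0$ holds by the endpoints. The plan is therefore feasible for $\mathcal P$, its cost equals $F_T(D_T)$, and so $F_T(D_T)\ge\mathrm{OPT}$. Note that $G_0\ni 0$, since $D_0+\mathcal A(\varnothing)=\{0\}$, and $D_T\in G_T$, so the initial and terminal states are legitimate.

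\textbf{Upper bound.} By Theorem~\ref{thm:main-grid}, there is an optimal solution $(x^*,I^*,y^*)$ with $P^*_t\in G_t$ for all $t$. This trajectory is admissible in the DP, so $F_T(D_T)\le\sum_t\bigl(h_tI^*_t+\phi_t(x^*_t)\bigr)$. It remains to compare with the true cost $\sum_t(f_ty^*_t+p_tx^*_t+h_tI^*_t)$. If $x^*_t>0$, then $y^*_t=1$ and $L_t\le x^*_t\le U_t$, so $\phi_t(x^*_t)=f_t+p_tx^*_t$. If $x^*_t=0$, then $\phi_t(0)=0\le f_ty^*_t$, using $f_t\ge0$. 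Hence $F_T(D_T)\le\mathrm{OPT}$.

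\textbf{Main obstacle.} The delicate point is the case $x_t=0$ with $y_t=1$ and $L_t>0$. It is infeasible for $\mathcal P$, yet the DP allows $x_t=0$ freely. This causes no trouble because the DP's zero transition is interpreted as $y_t=0$, which is always feasible. A second check is that the optimal solution supplied by Theorem~\ref{thm:main-grid} really exists: the problem is feasible, finitely many setup vectors $y$ each give a bounded polytope $\mathcal X(y)$, and a linear objective attains its minimum at an extreme point. That existence is presupposed in Theorem~\ref{thm:main-grid}, so I will simply invoke it.
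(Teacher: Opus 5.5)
Your proposal is correct and follows essentially the same route as the paper. The recurrence computes, by induction, the minimum cost over feasible plans whose cumulative trajectory lies on the grids, and the Main Grid Theorem guarantees that this restricted minimum equals the true optimum. Your split into two inequalities, with the explicit check that a setup $y^*_t=1$ with $x^*_t=0$ can only cost more because $f_t\ge 0$, makes precise a point the paper covers only by remarking that no setup needs to be activated when $x_t=0$.
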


\begin{proof}
We first consider the problem restricted to cumulative production
trajectories satisfying $P_t\in G_t$ for all $t\in\mathcal{T}\cup\{0\}$.

For $t=0$, the only feasible cumulative production level is $P_0=0$,
so the initialization is correct. 

Assume that the dynamic programming values are correct at the end of
period $t-1$, and consider a state $s\in G_t$. Every feasible partial
solution ending at $s$ has a predecessor $r=P_{t-1}\in G_{t-1}$,  and produces $x_t=s-r$.  The transition is feasible precisely when $x_t=0$ or $L_t\leq x_t\leq U_t$ for $x_t>0$. Its additional cost is $\phi_t(x_t)+h_t(s-D_t)$.  Taking the minimum over all feasible predecessors therefore gives
exactly recurrence~\eqref{eq:grid-dp-recurrence}.

Therefore, by induction, $F_T(D_T)$ is the optimal value among all feasible
solutions whose cumulative production levels belong to the grids. Moreover,  by the MGT,  restricting the dynamic program to the additive grids causes no loss of optimality. Thus, the solution of the dynamic program is optimal.   
\end{proof}

\subsection{Shortest path interpretation}
\label{subsec:shortest-path}

The dynamic program admits an equivalent shortest path interpretation.

Construct a layered directed acyclic graph. For every
$t\in\mathcal{T}\cup\{0\}$ and every $s\in G_t$, create a node $(t,s)$, such that the initial node is $(0,0)$,  and the terminal node is $(T,D_T)$.

For every $t\in\mathcal{T}$, draw an arc from $(t-1,r)$  to $(t,s)$ if and only if
\begin{equation*}
s-r=0
\quad\text{or}\quad
\left(
s-r>0
\text{ and }
L_t\leq s-r\leq U_t
\right). 
\end{equation*}

The cost of this arc is
\begin{equation*}
c_t(r,s)
=
\phi_t(s-r)+h_t(s-D_t). 
\end{equation*}

Every path from $(0,0)$ to $(T,D_T)$ defines a production plan through $x_t=P_t-P_{t-1}$.  Conversely, every feasible production plan whose cumulative production
trajectory belongs to the grids defines such a path.

Moreover, the total cost of the path is exactly the objective value of
the corresponding production plan. Hence, the GDP algorithm is equivalent to a shortest path problem in a
layered acyclic graph.

\subsection{Complexity}
\label{subsec:grid-dp-complexity}

Let $N_t:=|G_t|$, $t\in\mathcal{T}\cup\{0\}$,  and define $N:=\max_{0\leq t\leq T}N_t$.

A direct implementation of
recurrence~\eqref{eq:grid-dp-recurrence} examines every pair $(r,s)\in G_{t-1}\times G_t$  and therefore has running time
\begin{equation*}
O
\left(
\sum_{t=1}^{T}N_{t-1}N_t
\right)
\subseteq
O(TN^2). 
\end{equation*}

However, the special structure of the transition costs allows a substantially
faster implementation.

For $s\in\mathbb{R}$, define the extension
\begin{equation*}
\overline{F}_{t-1}(s)
:=
\begin{cases}
F_{t-1}(s),
& s\in G_{t-1},
\\
+\infty,
& s\notin G_{t-1}.
\end{cases} 
\end{equation*}
Separating zero production from positive production,
recurrence~\eqref{eq:grid-dp-recurrence} can be written as
\begin{equation}
F_t(s)
=
h_t(s-D_t)
+
\min
\left\{
\overline{F}_{t-1}(s),
\;
f_t+p_t s+
\min_{r\in W_t(s)}
\left\{
F_{t-1}(r)-p_t r
\right\}
\right\},
\label{eq:accelerated-recurrence}
\end{equation}
where
\begin{equation*}
W_t(s)
:=
\left\{
r\in G_{t-1}:
s-U_t\leq r\leq s-L_t,
\;
r<s
\right\}. 
\end{equation*}

Indeed, for positive production, $x_t=s-r$,  and therefore
\[
f_t+p_t(s-r)
=
f_t+p_t s-p_t r.
\]

For a fixed period $t$, suppose that $G_{t-1}$ and $G_t$ are sorted in increasing order. As $s$ moves through the ordered elements of $G_t$, both endpoints of the predecessor interval $[s-U_t,s-L_t]$ move monotonically to the right. Consequently, the quantities $F_{t-1}(r)-p_t r$ over the moving predecessor window $W_t(s)$ can be maintained using a monotone double-ended queue (deque). Indeed, let
$$
Q_t(r)=F_{t-1}(r)-p_t r.
$$
When a new predecessor $r_2$ enters the window, any earlier predecessor $r_1<r_2$ with $Q_t(r_2)\leq Q_t(r_1)$ can be discarded permanently. $r_2$ has no larger value and, because it lies to the right of $r_1$, it leaves every subsequent moving window no earlier than $r_1$. The deque therefore stores only non-dominated predecessors, in increasing order of both their grid position and their $Q_t$-value. Its first element gives the minimum over the current window. Each predecessor grid point enters and leaves the
deque at most once.

This gives the following complexity result.

\begin{proposition} 
\label{prop:grid-dp-complexity}
Assume that the period-dependent grids $G_0,\ldots,G_T$ are given and sorted. Then the GDP algorithm can be implemented in
\begin{equation}
O
\left(
\sum_{t=1}^{T}
(N_{t-1}+N_t)
\right)
\subseteq
O(TN)
\label{eq:accelerated-complexity}
\end{equation}
time and $O(N)$ working memory for computing the optimal value. 
\end{proposition}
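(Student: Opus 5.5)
The plan is to evaluate the accelerated recurrence \eqref{eq:accelerated-recurrence} period by period. For each period $t$ we need the two sorted lists $G_{t-1}$ (holding the values $F_{t-1}$) and $G_t$. The cost of period $t$ splits into two parts, matching the two terms in the inner minimum: the zero-production term $\overline{F}_{t-1}(s)$ and the positive-production term $\min_{r\in W_t(s)}\{F_{t-1}(r)-p_t r\}$. We show that both can be computed for all $s\in G_t$ in $O(N_{t-1}+N_t)$ time. Summing over $t=1,\ldots,T$ then gives \eqref{eq:accelerated-complexity}, and the bound by $O(TN)$ is immediate from $N_t\le N$. Correctness of the values computed is inherited from Theorem~\ref{thm:grid-dp}, since \eqref{eq:accelerated-recurrence} is just an algebraic rewriting of \eqref{eq:grid-dp-recurrence}: split $\phi_t$ into the cases $x=0$ and $x>0$, and use $f_t+p_t(s-r)=f_t+p_ts-p_tr$.

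\textbf{Zero-production term.} For each $s\in G_t$ we must decide whether $s\in G_{t-1}$ and, if so, read off $F_{t-1}(s)$. Both lists are sorted, so a single merge-like two-pointer scan does this for all $s$ in $O(N_{t-1}+N_t)$ time.

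\textbf{Positive-production term.} We scan $G_t$ in increasing order $s_1<s_2<\cdots$. The window is $W_t(s)=G_{t-1}\cap[s-U_t,\,s-L_t]\cap(-\infty,s)$. Its right endpoint is $\min(s-L_t,\,s^-)$, where $s^-$ means strictly below $s$, and its left endpoint is $s-U_t$. Both endpoints are nondecreasing in $s$, so the window slides monotonically to the right.

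We keep one pointer for insertion and one for expiry into $G_{t-1}$, together with a deque of indices. The deque invariant is that its indices are increasing in position and the values $Q_t(r)=F_{t-1}(r)-p_tr$ are strictly increasing along it. The operations are as follows:
\begin{itemize}
\item When $r_2$ enters the window, we pop from the back every $r_1$ with $Q_t(r_1)\ge Q_t(r_2)$, then push $r_2$.
\item When the front index falls below $s-U_t$, we pop it from the front.
\end{itemize}
The dominance argument already sketched in the text justifies the invariant. A discarded $r_1$ lies to the left of $r_2$ and has no smaller $Q_t$-value. Every future window contains $r_2$ whenever it contains $r_1$, because windows only move right and $r_2$ has already entered. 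So $r_1$ can never again be the unique minimizer.

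It follows by induction on the scan that the front of the deque attains $\min_{r\in W_t(s)}Q_t(r)$, or that the deque is empty, in which case the term is $+\infty$. The amortized count is direct: each index of $G_{t-1}$ is pushed once and popped at most once, and each $s$ causes $O(1)$ extra work. The total for period $t$ is therefore $O(N_{t-1}+N_t)$.

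\textbf{Memory.} To compute only the optimal value we keep just two consecutive layers $F_{t-1},F_t$ and the deque, which uses $O(N)$ working memory. Recovering an optimal plan through stored predecessors would need more space, but the statement does not claim this.

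The main obstacle is making the window bookkeeping rigorous at the boundary. The window has a closed bound $r\le s-L_t$ together with a strict bound $r<s$, and when $L_t=0$ these combine into the half-open condition $r<s$. One must check that this effective right endpoint is still monotone in $s$, so that insertion never has to be undone. It is: the condition $r<s$ is monotone in $s$, so the insertion rule becomes ``insert all $r<s$ with $r\le s-L_t$.''
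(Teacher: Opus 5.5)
Your proposal is correct and follows the same route as the paper's proof. Both use a monotone-deque sliding-window minimum of $Q_t(r)=F_{t-1}(r)-p_t r$ over $W_t(s)$, evaluate the zero-production term $\overline{F}_{t-1}(s)$ during the same sorted scan, and count each point of $G_{t-1}$ as pushed and popped at most once to get $O(N_{t-1}+N_t)$ per period, with two layers plus the deque giving $O(N)$ memory. Your explicit treatment of the half-open window $r<s$ when $L_t=0$ and your statement of the deque invariant are welcome added rigor, not a different argument.
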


\begin{proof}
Fix a period $t$. The states of $G_t$ and the predecessor states of $G_{t-1}$ are scanned in increasing order.

For the positive production term in \eqref{eq:accelerated-recurrence}, a monotone deque maintains the minimum of $F_{t-1}(r)-p_t r$ over the moving window $W_t(s)$. Each point of $G_{t-1}$ is inserted into the deque at most once and removed at most once. Hence, evaluating all positive production transitions in period $t$ requires $O(N_{t-1}+N_t)$ time. The zero-production term $\overline{F}_{t-1}(s)$ can be evaluated during the same ordered scan. Summing over all periods gives~\eqref{eq:accelerated-complexity}. 

Only two consecutive dynamic programming layers and the monotone deque need to be stored when only the optimal value is required. Each contains at most $O(N)$ values, yielding the stated working-memory bound. If a predecessor is stored for every finite state in order to recover an optimal solution without recomputation, the corresponding storage is $O(\sum_{t=0}^{T}N_t)$.
\end{proof}

\sloppy If the period-dependent grids are not initially sorted, the total
sorting cost is $O
\left(
\sum_{t=0}^{T}N_t\log N_t
\right)$. Thus, including sorting, the dynamic programming phase requires 

\begin{equation*}
    O\left(\sum_{t=0}^{T}N_t\log N_t + \sum_{t=1}^{T}(N_{t-1}+N_t)\right),
\end{equation*}

which is

\begin{equation*}
O
\left(
\sum_{t=0}^{T}N_t\log N_t
+
TN
\right). 
\end{equation*} 
Alternatively, one may use the global additive grid $G=\bigcup_{t=0}^{T}G_t$ as a common state space in every period. 

Let $N_G:=|G|$. After sorting $G$ once, the same sliding window argument yields a
running time of
\begin{equation*}
O
\left(
N_G\log N_G+TN_G
\right) 
\end{equation*}
and $O(N_G)$ working memory for computing the optimal value.

The period-dependent formulation may use fewer states, whereas the
common grid formulation is simpler and will be particularly convenient
for the polynomiality analysis developed in   Section~\ref{sec:polynomial-grid}.

The complexity of the GDP algorithm is therefore
controlled by a single structural quantity: the number of additive
grid points that must be represented. The MGT guarantees
that these points contain an optimal cumulative production trajectory,
but, as shown in Example~\ref{ex:exponential-grid}, their number may
still be exponential.

This leads to the second fundamental question of Grid Theory:
\begin{quote}
\centering
\emph{Under what conditions does the additive structure of the
production boundaries generate a polynomial size grid?}
\end{quote}

The next section addresses this question through the notions of
polynomial grids and additive dimension.

\section{Polynomial Grid Theory}
\label{sec:polynomial-grid}

This section develops the second main component of Grid Theory. We
first formalize the polynomial grid condition. We then introduce the
notion of additive dimension, which measures the number of aggregate
coordinates required to represent the production boundaries. Bounded
additive dimension will provide a general sufficient condition for
polynomial solvability.

Throughout this section, all numerical data are assumed to be rational
and encoded in binary. Arithmetic operations and comparisons are
performed exactly.

\subsection{The polynomial grid condition}
\label{subsec:polynomial-grid-condition}

Let $\mathfrak{I}$ be a family of instances of
problem~$\mathcal P$, and let $\langle I\rangle$ denote the
binary encoding length of an instance $I\in\mathfrak{I}$.

For a given instance, let $G_0,\ldots,G_T$ be the period-dependent additive grids defined in
Section~\ref{sec:grid-theory}, and let $G=\bigcup_{t=0}^{T}G_t$ be the corresponding global grid.

It is not necessary to construct $G$ exactly. A larger set may be used
as long as it contains all grid points required by the MGT.

\begin{definition} 
\label{def:grid-envelope}
A finite set $H\subseteq[0,D_T]$ is called a \emph{grid envelope} for an instance $I$ if $G\subseteq H$.  
\end{definition}

A grid envelope may contain points that do not belong to the exact additive grid and may even contain cumulative production levels that cannot occur in any feasible solution. Such additional states do not affect correctness, since only states $s\geq D_t$ are considered in period $t$ and production feasibility is still enforced by the dynamic programming transitions.

\begin{proposition} 
\label{prop:grid-enlargement}
Let $H$ be a grid envelope. If the GDP algorithm is applied using $H\cap[D_t,D_T]$ in period $t$, then it returns the optimal value of problem~$\mathcal P$.
\end{proposition}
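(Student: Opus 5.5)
The plan is a sandwich argument: the dynamic program over the enlarged state sets $H_t:=H\cap[D_t,D_T]$ can only do better than the one over the grids $G_t$, and it can never do better than the true optimum. The whole proof rests on two facts: $G_t\subseteq H_t$ for every $t$, and the recurrence~\eqref{eq:grid-dp-recurrence} only ever builds genuinely feasible partial plans.

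First I would check the inclusion. By definition~\eqref{eq:period-grid}, $G_t\subseteq[D_t,D_T]$, and $G_t\subseteq G\subseteq H$ because $H$ is a grid envelope. Hence $G_t\subseteq H_t$. In particular $0=D_0\in G_0\subseteq H_0$ and $D_T\in G_T\subseteq H_T$ (using the trivial anchor $j=t$), so the initial state $(0,0)$ and terminal state $(T,D_T)$ both exist in the enlarged layered graph. Here $H_0=H\cap[0,D_T]$ may contain states other than $0$, but they are initialized to $+\infty$ exactly as before.

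Second, I would rerun the induction from the proof of Theorem~\ref{thm:grid-dp} with $H_t$ in place of $G_t$, but without invoking the MGT at the end. This shows that the value at state $s\in H_t$ equals the minimum cost over partial plans with $P_t=s$ and $P_u\in H_u$ for $u\le t$. Every such plan is feasible for periods $1,\ldots,t$, for three reasons:
\begin{itemize}
\item inventories are nonnegative since $s\ge D_t$;
\item each transition enforces $x_u=0$ or $L_u\le x_u\le U_u$;
\item the cost $\phi_u+h_u(\cdot)$ is exactly the objective contribution.
\end{itemize}
Thus the returned value $F^H_T(D_T)$ is the cost of some feasible plan, if finite, and so $F^H_T(D_T)\ge \mathrm{OPT}$.

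For the reverse inequality, take the optimal trajectory from Theorem~\ref{thm:main-grid} with $P_t\in G_t\subseteq H_t$ for all $t$. It is a path from $(0,0)$ to $(T,D_T)$ in the enlarged graph, so $F^H_T(D_T)\le \mathrm{OPT}$. Equality follows.

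The only mildly delicate point is the feasibility claim. The extra envelope states must not admit infeasible or undercosted paths. This is guaranteed because feasibility is encoded entirely in the state restriction $s\ge D_t$, the transition rule, and the terminal condition $P_T=D_T$, and none of these depend on membership in $G$.
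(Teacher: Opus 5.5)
Your proof is correct and follows the same two-sided argument as the paper. The MGT trajectory lies in $G_t\subseteq H\cap[D_t,D_T]$, so the enlarged dynamic program can attain the optimum, while every finite-cost path over the enlarged states is feasible (via $s\ge D_t$, the transition rule, and $P_T=D_T$) and therefore cannot undercut it; your explicit check that $(0,0)$ and $(T,D_T)$ belong to the enlarged layers is a small detail the paper leaves implicit.
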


\begin{proof}
Since $G\subseteq H$, the MGT guarantees the existence of an optimal solution whose cumulative production trajectory is entirely contained in $H$. Since $G_t\subseteq[D_t,D_T]$, this trajectory is also contained in $H\cap[D_t,D_T]$ in every period $t$.

Conversely, every finite-cost path considered by the dynamic program satisfies the original production boundary and inventory constraints. The restriction $s\geq D_t$ ensures nonnegative inventory, while the transition conditions enforce production feasibility.
Thus, enlarging the grid may introduce additional states, but it cannot introduce a solution with an objective value smaller than the true optimum.

Hence, the dynamic program over $H$ contains at least one optimal trajectory and returns the optimal value.
\end{proof}

This observation allows polynomiality to be formulated in terms of any
efficiently constructible envelope rather than the exact grid.

\begin{definition} 
\label{def:polynomial grid-condition}
A family $\mathfrak{I}$ of dynamic lot-sizing instances is said to satisfy the \emph{polynomial grid condition} if there exists a polynomial $q$ such that, for every instance $I\in\mathfrak{I}$, one can construct in time polynomial in $\langle I\rangle$, a grid envelope $H(I)$ satisfying $|H(I)|\leq q(\langle I\rangle)$, with all grid values represented exactly.
\end{definition}

The polynomial grid condition immediately yields a polynomial algorithm.

\begin{proposition} 
\label{prop:polynomial grid-principle}
Every family of instances satisfying the polynomial grid condition is
solvable in polynomial time by the GDP algorithm.
\end{proposition}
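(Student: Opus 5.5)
The plan is to chain Proposition~\ref{prop:grid-enlargement} with the complexity bound of Proposition~\ref{prop:grid-dp-complexity}, applied to the period-dependent restrictions $H_t:=H(I)\cap[D_t,D_T]$. Fix a family $\mathfrak{I}$ satisfying the polynomial grid condition and an instance $I\in\mathfrak{I}$. The algorithm runs in three phases: construct the envelope, build and sort the layers, and run the accelerated GDP.

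\textbf{Envelope and layers.} By Definition~\ref{def:polynomial grid-condition}, the envelope $H(I)$ is constructed in time polynomial in $\langle I\rangle$, with $|H(I)|\le q(\langle I\rangle)$ and every element represented exactly. The cumulative demands $D_0,\ldots,D_T$ are sums of input numbers, so they have encoding length polynomial in $\langle I\rangle$ and are computed in polynomial time. Since $T\le\langle I\rangle$, forming the sets $H_t$ for $t=0,\ldots,T$ takes $O(T\,|H(I)|)$ exact comparisons. Sorting $H(I)$ once and filtering preserves the order, so sorting costs $O(|H|\log|H|)$ comparisons. By Proposition~\ref{prop:grid-enlargement}, running GDP on the layers $H_t$ returns the optimal value of $\mathcal P$.

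\textbf{GDP phase.} Proposition~\ref{prop:grid-dp-complexity} holds verbatim with $H_t$ in place of $G_t$. Its proof used only that the layers are sorted and the sliding-window structure of recurrence~\eqref{eq:accelerated-recurrence}, not the specific form of $G_t$. Hence the number of elementary operations is $O(T\,|H(I)|)=O(\langle I\rangle\, q(\langle I\rangle))$. An optimal plan can also be recovered by storing predecessors, which adds only polynomial storage.

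\textbf{Main obstacle: bit complexity.} Polynomially many arithmetic operations do not by themselves give polynomial time. I must also check that every number manipulated has polynomial encoding length. The grid values are given exactly with polynomial length, since they were produced by a polynomial-time construction. The transitions involve only differences $s-r$ and comparisons with $L_t,U_t$, which are fine.

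The DP values $F_t(s)$ need an argument. Each finite $F_t(s)$ is the cost of a specific partial plan, namely a sum over at most $T$ periods of terms $f_u+p_u x_u$ and $h_u(P_u-D_u)$. Each $x_u$ is a difference of two grid values and each $P_u$ is a grid value. So $F_t(s)$ is a sum of $O(T)$ products of input numbers and envelope values, all of polynomial length. A sum of polynomially many rationals of polynomial length has polynomial length; bounding the common denominator by the product of the denominators suffices. The same holds for the deque keys $Q_t(r)=F_{t-1}(r)-p_t r$.

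Therefore each arithmetic operation and comparison runs in polynomial time. Since the algorithm performs polynomially many such operations, its total running time is polynomial in $\langle I\rangle$.
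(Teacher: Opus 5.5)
Your proof is correct and follows the same route as the paper: construct and sort the envelope, use Proposition~\ref{prop:grid-enlargement} for correctness, and use Proposition~\ref{prop:grid-dp-complexity} for an $O(N_H\log N_H+TN_H)$ operation count, which is polynomial because both $T$ and $N_H$ are. Your additional check that every dynamic programming value and deque key has polynomially bounded encoding length (each finite $F_t(s)$ being the cost of a concrete partial plan) is a detail the paper's proof leaves implicit, and it is what turns the operation count into a genuine polynomial bit-complexity bound.
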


\begin{proof}
Let $H(I)$ be the grid envelope provided by
Definition~\ref{def:polynomial grid-condition}, and set $N_H:=|H(I)|$.  By assumption, $N_H$ is polynomially bounded in $\langle I\rangle$,
and the grid can be generated in polynomial time.

A polynomial number of rational grid points can also be sorted in
polynomial time. By Proposition~\ref{prop:grid-dp-complexity}, the
dynamic programming phase requires $O(N_H\log N_H+TN_H)$ time when the common grid implementation is used. Since $T$ is polynomially bounded by $\langle I\rangle$ and $N_H$ is polynomially bounded, the complete algorithm runs in polynomial time.
\end{proof}

Definition~\ref{def:polynomial grid-condition} is deliberately stated
in terms of a grid envelope. Exact enumeration of all boundary sums is
not required. The central issue is whether the additive structure can
be represented by a polynomial number of states while preserving at
least one optimal trajectory.

We next introduce a structural condition that guarantees precisely
this property.

\subsection{Additive dimension}
\label{subsec:additivedimension}

Let the collection $\mathcal{B}
:=
(\mathcal{B}_1,\ldots,\mathcal{B}_T)$  be called the \emph{boundary resource profile} of the instance.

The exact grids introduced in Section~\ref{sec:grid-theory} are
generated by sums of elements of these sets. Their size is therefore
determined by the additive complexity of the boundary resource profile.

The following definition makes this idea precise.

\begin{definition} 
\label{def:additive-representation}
A family of boundary resource profiles is said to admit a
$p$-dimensional additive representation if there exist fixed
nonnegative integers $c_1,\ldots,c_p$, independent of the planning horizon, such that every profile in the
family admits rational basis values $\theta_1,\ldots,\theta_p$ and integer-valued coordinate maps $g_i(t,b)$,  with $i=1,\ldots,p$, $t\in\mathcal{T}$, and $b\in\mathcal{B}_t$,   satisfying
\begin{equation}
b
=
\sum_{i=1}^{p}
\theta_i g_i(t,b),
\qquad
t\in\mathcal{T},
\quad
b\in\mathcal{B}_t,
\label{eq:additive-representation}
\end{equation}
with $|g_i(t,b)|
\leq
T^{c_i}$ for all $i=1,\ldots,p$, and we additionally set $g_i(t,0)=0$ for all $i=1,\ldots,p$. 
 
We further require that the basis values $\theta_i$ and the coordinates $g_i(t,b)$ can be computed in time polynomial in the binary encoding length of the instance, and that their binary encoding lengths are polynomially bounded.
 
\end{definition}

The basis values $\theta_i$ may depend on the instance. What is fixed over the family is the number $p$ of additive coordinates and the polynomial growth rates of these coordinates.

This leads to the central structural parameter.

\begin{definition} 
\label{def:additive dimension}
The \emph{additive dimension} of a family of boundary resource
profiles, denoted by $\operatorname{adim}(\mathcal{B})$,  is the smallest integer $p$ for which the family admits a
$p$-dimensional additive representation in the sense of
Definition~\ref{def:additive-representation}.

A family is said to have \emph{bounded additive dimension} if this
number is bounded independently of the planning horizon.
\end{definition}

\begin{remark} 
The term \enquote{additive dimension} used here should not be confused
with the notion bearing the same name in  \textit{additive combinatorics} and
\textit{additive number theory}.   The notion introduced in
Definition~\ref{def:additive dimension} is specific to the present
framework.  
\end{remark}

Additive dimension measures the number of aggregate coordinates needed
to describe the resource quantities that generate the grid. It should
not be confused with the number of periods or with the number of
possible subsets of periods.

To see its significance, consider an arbitrary boundary sum $a
=
\sum_{u\in J}b_u$ where $b_u\in\mathcal{B}_u$ and   $J\subseteq\mathcal{T}$. Using~\eqref{eq:additive-representation}, we obtain
\begin{equation}
a
=
\sum_{u\in J}
\sum_{i=1}^{p}
\theta_i g_i(u,b_u)
=
\sum_{i=1}^{p}
\theta_i\beta_i,
\label{eq:aggregate-representation}
\end{equation}
where $\beta_i
:=
\sum_{u\in J}g_i(u,b_u)$ for all $i=1,\ldots,p$.

Since $|J|\leq T$ and
$|g_i(u,b_u)|\leq T^{c_i}$, then $|\beta_i|
\leq
T^{c_i+1}$ for all $i=1,\ldots,p$.

Thus, exponentially many selections of boundary quantities are
compressed into the fixed-dimensional integer vector $(\beta_1,\ldots,\beta_p)$,   whose coordinates range only over polynomially bounded intervals. This is the fundamental compression mechanism underlying polynomial
Grid Theory.

To formalize it, define $U^{\Sigma}
:=
\sum_{t=1}^{T}U_t$. Then, every nonnegative boundary sum belongs to the following set:
\begin{equation*}
\widehat{\mathcal{A}}
:=
\left\{
\sum_{i=1}^{p}\theta_i\beta_i
:
\beta_i\in\mathbb{Z},
\;
|\beta_i|\leq T^{c_i+1},
\;
i=1,\ldots,p
\right\}
\cap[0,U^{\Sigma}]. 
\end{equation*}

The set $\widehat{\mathcal{A}}$ need not consist only of realizable
boundary sums. It is an additive envelope containing every sum that
can occur in the positive or negative grids.

\begin{lemma} 
\label{lem:additive-compression}
Suppose that the boundary resource profile admits a $p$-dimensional
additive representation. Then every boundary sum set
$\mathcal{A}(J)$ satisfies $\mathcal{A}(J)
\subseteq
\widehat{\mathcal{A}}$ for all $J\subseteq\mathcal{T}$.  Moreover,
\begin{equation*}
|\widehat{\mathcal{A}}|
\leq
\prod_{i=1}^{p}
\left(
2T^{c_i+1}+1
\right). 
\end{equation*}
\end{lemma}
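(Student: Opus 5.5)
The plan is to prove the two assertions separately: first the inclusion, then the counting bound.

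\emph{Inclusion.} Fix $J\subseteq\mathcal{T}$ and an element $a\in\mathcal{A}(J)$. Then $a=\sum_{u\in J}b_u$ with $b_u\in\mathcal{B}_u$. Substituting the representation \eqref{eq:additive-representation} for each $b_u$ gives \eqref{eq:aggregate-representation}, namely $a=\sum_{i}\theta_i\beta_i$ with $\beta_i=\sum_{u\in J}g_i(u,b_u)$. I will check three properties of this expression.
\begin{enumerate}
\item Each $\beta_i$ is an integer, because every $g_i(u,b_u)$ is integer-valued.
\item The coordinates are bounded: $|\beta_i|\le |J|\,T^{c_i}\le T^{c_i+1}$, by the triangle inequality and $|J|\le T$.
\item The sum lies in the right interval: $0\le a\le U^{\Sigma}$. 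This holds because each $b_u\in\{0,L_u,U_u\}$ satisfies $0\le b_u\le U_u$, so $0\le a\le\sum_{u\in J}U_u\le\sum_{t=1}^{T}U_t$.
\end{enumerate}
Together these place $a$ in $\widehat{\mathcal{A}}$. For the empty set, $\mathcal{A}(\varnothing)=\{0\}$, and $0$ is obtained by taking $\beta=0$. The convention $g_i(t,0)=0$ keeps the choice $b_u=0$ consistent with the representation.

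\emph{Cardinality.} Consider the map $\beta\mapsto\sum_i\theta_i\beta_i$ from the integer box $\prod_{i=1}^{p}\{-T^{c_i+1},\dots,T^{c_i+1}\}$ into $\mathbb{Q}$. By definition, $\widehat{\mathcal{A}}$ is contained in the image of this map, intersected with $[0,U^{\Sigma}]$. The size of an image is at most the size of the domain, and intersecting can only shrink a set. Each coordinate takes at most $2T^{c_i+1}+1$ integer values, so $|\widehat{\mathcal{A}}|\le\prod_{i=1}^{p}(2T^{c_i+1}+1)$. Non-injectivity of the map, which occurs when the $\theta_i$ are rationally dependent, only helps the bound.

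Neither step seems to be a real obstacle. The only points that need care are that the bounds $\beta_i\in[-T^{c_i+1},T^{c_i+1}]$ use $|J|\le T$ rather than $|J|\le T+1$, and that boundary sums are nonnegative and at most $U^{\Sigma}$, which rests on $0\le L_t\le U_t$.
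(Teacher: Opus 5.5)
Your proposal is correct and follows the paper's proof essentially step for step. Like the paper, you substitute the additive representation to obtain aggregate coordinates with $|\beta_i|\le T^{c_i+1}$, use nonnegativity of the production bounds to place $a$ in $[0,U^{\Sigma}]$, and bound $|\widehat{\mathcal{A}}|$ by the number of integer coordinate vectors. Your explicit checks of integrality and of the empty-set case are minor points that the paper leaves implicit.
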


\begin{proof}
Let $a\in\mathcal{A}(J)$. Then $a=\sum_{u\in J}b_u$  for some $b_u\in\mathcal{B}_u$. By~\eqref{eq:aggregate-representation}, $a
=
\sum_{i=1}^{p}\theta_i\beta_i$,  where the aggregate coordinates satisfy $|\beta_i|\leq T^{c_i+1}$. 
 
Furthermore, because all production boundaries are nonnegative, we have  $0\leq a\leq U^\Sigma$. Therefore, $a\in\widehat{\mathcal{A}}$.

For each coordinate $\beta_i$, there are at most $2T^{c_i+1}+1$  integer values satisfying its bound. Hence, the total number of
coordinate vectors is at most $\prod_{i=1}^{p}
\left(
2T^{c_i+1}+1
\right)$. 
 
Different vectors may generate the same value, so this is also an
upper bound on
$|\widehat{\mathcal{A}}|$.
\end{proof}

Using this additive envelope, define the common cumulative production
grid
\begin{equation*}
\widehat{G}
:=
\left\{
D_j+\sigma a
:
j\in\{0,\ldots,T\},
\;
\sigma\in\{-1,1\},
\;
a\in\widehat{\mathcal{A}}
\right\}
\cap[0,D_T]. 
\end{equation*}
The exact additive grid is contained in this set.

\begin{lemma} 
\label{lem:grid-compression}
If the boundary resource profile admits a $p$-dimensional additive
representation, then $G\subseteq\widehat{G}$.   Moreover,
\begin{equation}
|\widehat{G}|
\leq
2(T+1)
\prod_{i=1}^{p}
\left(
2T^{c_i+1}+1
\right).
\label{eq:aggregate-grid-cardinality}
\end{equation}
\end{lemma}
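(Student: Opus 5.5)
The plan is to prove the inclusion $G\subseteq\widehat{G}$ point by point, and then bound $|\widehat{G}|$ by counting parameter triples $(j,\sigma,a)$ with the help of Lemma~\ref{lem:additive-compression}.

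\textbf{Inclusion.} Fix $t\in\mathcal{T}\cup\{0\}$ and a point $s\in G_t$. By definition~\eqref{eq:period-grid}, $s\in G_t^{+}\cup G_t^{-}$ and $s\in[D_t,D_T]\subseteq[0,D_T]$. There are two cases.

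If $s\in G_t^{+}$, then by~\eqref{eq:positive-grid} we can write $s=D_j+a$ for some $0\le j\le t$ and some $a\in\mathcal{A}(\{j+1,\ldots,t\})$.

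If $s\in G_t^{-}$, then by~\eqref{eq:negative-grid} we can write $s=D_j-a$ for some $t\le j\le T$ and some $a\in\mathcal{A}(\{t+1,\ldots,j\})$.

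In both cases $a$ lies in a boundary sum set $\mathcal{A}(J)$ with $J\subseteq\mathcal{T}$. The empty interval is allowed, since $\mathcal{A}(\varnothing)=\{0\}$ and $0\in\widehat{\mathcal{A}}$ (take all $\beta_i=0$). Lemma~\ref{lem:additive-compression} then gives $a\in\widehat{\mathcal{A}}$. Hence $s=D_j+\sigma a$ with $j\in\{0,\ldots,T\}$, $\sigma\in\{-1,1\}$ and $a\in\widehat{\mathcal{A}}$. Together with $s\in[0,D_T]$, this gives $s\in\widehat{G}$. Since $t$ and $s$ were arbitrary, taking the union over $t$ yields $G\subseteq\widehat{G}$.

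\textbf{Cardinality.} $\widehat{G}$ is the image of the finite parameter set $\{0,\ldots,T\}\times\{-1,1\}\times\widehat{\mathcal{A}}$ under the map $(j,\sigma,a)\mapsto D_j+\sigma a$, intersected with $[0,D_T]$. Both the intersection and the passage to the image can only decrease cardinality. Therefore
\[
|\widehat{G}|\le 2(T+1)\,|\widehat{\mathcal{A}}|.
\]
Inserting the bound on $|\widehat{\mathcal{A}}|$ from Lemma~\ref{lem:additive-compression} gives~\eqref{eq:aggregate-grid-cardinality}.

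\textbf{Main point to check.} The only step needing care is that every generator $a$ actually used in $G_t^{\pm}$ comes from some $\mathcal{A}(J)$. This holds because each such $a$ is a sum of boundary values, including the trivial case of empty index sets, which contributes $a=0$. It is also worth noting that the interval restriction in $\widehat{G}$ is $[0,D_T]$ rather than $[D_t,D_T]$. This does not affect the inclusion, because $[D_t,D_T]\subseteq[0,D_T]$ for every $t$.
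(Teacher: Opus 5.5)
Your proof is correct and follows the paper's argument step for step. Both split $s\in G_t$ into the cases $G_t^{+}$ and $G_t^{-}$, apply Lemma~\ref{lem:additive-compression} to the generator $a$, and bound the count by $2|\widehat{\mathcal{A}}|$ values per anchor. Your explicit remarks on the empty index set and on $[D_t,D_T]\subseteq[0,D_T]$ (using $D_t\geq 0$) fill in small details the paper leaves implicit.
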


\begin{proof}
Let $s\in G$. Then $s\in G_t$ for some
$t\in\mathcal{T}\cup\{0\}$. If $s\in G_t^+$, then by definition $s
=
D_j+a$ for some $0\leq j\leq t$ and some $a\in
\mathcal{A}(\{j+1,\ldots,t\})$.  By Lemma~\ref{lem:additive-compression}, $a\in\widehat{\mathcal{A}}$,  and therefore $s\in\widehat{G}$.

Similarly, if $s\in G_t^-$, then $s
=
D_j-a$ for some $t\leq j\leq T$ and $a\in
\mathcal{A}(\{t+1,\ldots,j\})$.  Again, $a\in\widehat{\mathcal{A}}$,  and hence $s\in\widehat{G}$.  Thus, $G\subseteq\widehat{G}$.

For every cumulative demand anchor $D_j$, there are at most
$2|\widehat{\mathcal{A}}|$ values corresponding to the two signs.
Hence, $|\widehat{G}|
\leq
2(T+1)|\widehat{\mathcal{A}}|$.  

The result follows from
Lemma~\ref{lem:additive-compression}.
\end{proof}

The preceding lemmas reveal the role of additive dimension. The exact
number of subsets of production boundaries may be exponential, but
the number of aggregate descriptions of those subsets is polynomial
whenever the additive dimension is bounded.

\begin{remark} 
\label{rem:LiLi}
Li and Li \cite{LiLi2016} provide a general polynomial solvability
framework based on a fixed set of time-independent
\emph{special replenishment quantities} and the property that at most
one period between consecutive regeneration periods uses a quantity
outside this set. A fixed collection $\{q_1,\ldots,q_R\}$ of boundary values fits naturally within the present framework by taking the $q_r$ as basis values and using count coordinates. Hence, the setting with a finite set of special quantities constitutes an important special case of the additive representation considered here.

The converse need not hold. Bounded additive dimension does not require the number of distinct boundary values to remain fixed. For example, a fixed-degree polynomial capacity profile $C_t=\sum_{\ell=0}^{d}\gamma_\ell t^\ell$ may contain $T$ distinct capacity values, while every capacity sum is described by the fixed collection of power sum coordinates $\sum_{t\in S}t^\ell$ for  $\ell=0,\ldots,d$ (cf., Section~\ref{subsec:polynomial-capacities}). Thus, additive dimension captures structured resource families that need not admit a fixed list of special replenishment quantities.
\end{remark}

\subsection{The General Polynomiality Theorem}
\label{subsec:general-polynomiality}

We can now state the second main theorem of Grid Theory, which we call the \textit{General Polynomiality Theorem} (GPT). 

\begin{theorem} 
\label{thm:general-polynomiality}
Consider a family $\mathfrak{I}$ of feasible dynamic lot-sizing
instances of the form~$\mathcal P$. If the corresponding
boundary resource profiles have bounded additive dimension, then
$\mathfrak{I}$ satisfies the polynomial grid condition.  Consequently, every such instance can be solved exactly in polynomial
time by the GDP algorithm.
\end{theorem}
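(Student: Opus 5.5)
The plan is to take $H(I):=\widehat{G}$, the common cumulative production grid built from the additive envelope $\widehat{\mathcal{A}}$, as the grid envelope required by Definition~\ref{def:polynomial grid-condition}, and then invoke Proposition~\ref{prop:polynomial grid-principle}. Three things must be checked: (i) $\widehat{G}$ is a grid envelope, (ii) $|\widehat{G}|$ is bounded by a fixed polynomial $q(\langle I\rangle)$, and (iii) $\widehat{G}$ can be constructed exactly in time polynomial in $\langle I\rangle$.

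For (i), Lemma~\ref{lem:grid-compression} gives $G\subseteq\widehat{G}$, and by construction $\widehat{G}\subseteq[0,D_T]$, so Definition~\ref{def:grid-envelope} holds. For (ii), bounded additive dimension means there is a $p$, together with exponents $c_1,\ldots,c_p$, that is fixed over the family $\mathfrak{I}$. Bound~\eqref{eq:aggregate-grid-cardinality} then gives
\[
|\widehat{G}|\le 2(T+1)\prod_{i=1}^{p}\bigl(2T^{c_i+1}+1\bigr)=O\bigl(T^{\,1+\sum_i (c_i+1)}\bigr).
\]
Since $T\le\langle I\rangle$ (each period contributes at least one input number), this is bounded by a polynomial $q$ in $\langle I\rangle$ whose degree depends only on $p$ and the $c_i$.

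For (iii), we compute $\theta_1,\ldots,\theta_p$ in polynomial time, which the representation assumption allows. We then enumerate all integer vectors $(\beta_1,\ldots,\beta_p)$ with $|\beta_i|\le T^{c_i+1}$; there are polynomially many. For each vector we evaluate $a=\sum_i\theta_i\beta_i$ exactly in rational arithmetic and keep it if $0\le a\le U^{\Sigma}$. Then, for each anchor $D_j$ and each sign $\sigma$, we form $D_j+\sigma a$, keep it if it lies in $[0,D_T]$, and remove duplicates by sorting. Note that this step needs only the $\theta_i$ and the coordinate bounds; the maps $g_i$ are never evaluated.

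The main obstacle is the bit-size bound behind (iii): each generated rational must have polynomial encoding length, so that each arithmetic operation, and the sorting, takes polynomial time. I would argue this by a direct estimate. Each $\theta_i$ has polynomial encoding length by Definition~\ref{def:additive-representation}. Each $\beta_i$ has $O(c_i\log T)$ bits. A sum of $p$ rational products has numerator and denominator lengths bounded by the sum of the constituent lengths, plus an $O(\log p)$ term when the denominators are put over a common denominator. Adding $D_j$, which is part of the input, keeps the length polynomial. Once encoding lengths are polynomial, comparisons and sorting cost polynomial time per operation.

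With (i)--(iii) established, the family satisfies the polynomial grid condition. Proposition~\ref{prop:grid-enlargement} shows that running GDP on $\widehat{G}\cap[D_t,D_T]$ returns the optimal value, and Proposition~\ref{prop:polynomial grid-principle}, through the complexity bound of Proposition~\ref{prop:grid-dp-complexity}, gives a polynomial running time. An optimal plan is then recovered by predecessor tracing, as described in the recurrence subsection.
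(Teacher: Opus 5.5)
Your proposal is correct and follows the paper's proof essentially step for step. You take $\widehat{G}$ as the envelope via Lemma~\ref{lem:grid-compression}, bound its size by \eqref{eq:aggregate-grid-cardinality} using the fixed $p$ and $c_i$, generate it by enumerating the polynomially many vectors $(\beta_1,\ldots,\beta_p)$ and combining each value with the $T+1$ anchors and two signs, and then invoke the polynomial grid principle. Your explicit encoding-length estimate, and your remark that only the $\theta_i$ (not the maps $g_i$) are needed to build $\widehat{\mathcal{A}}$, are minor refinements of points the paper leaves implicit.
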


\begin{proof}
Let $p$ denote the bounded additive dimension, and let
$c_1,\ldots,c_p$ be the fixed exponents of an additive
representation. By Lemma~\ref{lem:grid-compression}, the exact additive grid is
contained in the grid envelope $\widehat{G}$, whose cardinality
satisfies~\eqref{eq:aggregate-grid-cardinality}. Since $p$ and $c_1,\ldots,c_p$ are fixed independently of $T$, this
quantity is polynomial in $T$. More precisely, $|\widehat{G}|
=
O
\left(
T^{
1+\sum_{i=1}^{p}(c_i+1)
}
\right)$.  

By Definition \ref{def:additive-representation}, the basis values and coordinate maps are polynomial-time computable and have polynomially bounded encoding lengths. The additive envelope $\widehat{\mathcal{A}}$ can therefore be generated by enumerating all
integer vectors $(\beta_1,\ldots,\beta_p)$ satisfying $|\beta_i|\leq T^{c_i+1}$ for all $i=1,\ldots,p$, and evaluating $\sum_{i=1}^{p}\theta_i\beta_i$ exactly.

Because the number of such vectors is polynomial, the additive
envelope can be generated in polynomial time. Combining each retained
value with the $T+1$ cumulative demand anchors and the two possible
signs generates $\widehat{G}$ in polynomial time.

Therefore, $\widehat{G}$ satisfies the polynomial grid condition. Moreover, by Proposition~\ref{prop:polynomial grid-principle}, the GDP algorithm solves the problem in polynomial time.
\end{proof}

The GPT provides the central answer to
the complexity question studied in this paper. More explicitly, let $\kappa
:=
\sum_{i=1}^{p}(c_i+1)$.  Then $|\widehat{G}|=O(T^{1+\kappa})$,  and the common grid implementation of
Section~\ref{sec:grid-dp} runs in
\begin{equation*}
O
\left(
T^{1+\kappa}\log T
+
T^{2+\kappa}
\right)
=
O(T^{2+\kappa}) 
\end{equation*}
arithmetic operations and comparisons. Under the binary encoding assumptions above, the corresponding exact rational operations and grid generation also require polynomial time.

The GPT  should be interpreted as a sufficient structural theorem, not as a characterization of the intrinsic complexity of every dynamic lot-sizing problem. A family whose boundary resource profile does not have bounded additive dimension may still admit a polynomial time algorithm through some other structural property. Similarly, an exponentially large exact grid does not imply computational intractability; it only shows that explicit enumeration of that grid does not provide a polynomial algorithm.

The importance of additive dimension is that it provides a common
explanation for a broad collection of polynomially solvable families.
Rather than designing an unrelated algorithm for each particular
resource pattern, one can establish a low-dimensional additive
representation and invoke the GPT.

\subsection{Extension to separable concave costs}
\label{subsec:concave costs}

The development so far has considered linear production and inventory costs. The grid structure is not restricted to linear costs. It follows from the geometry of the feasible region whenever the objective admits an optimal extreme point solution.


To make this distinction explicit, consider the following extension of
problem~$\mathcal P$. For each period $t$, let
$c_t(x_t)$ denote the production cost and let $H_t(I_t)$ denote the
inventory cost. We denote this problem by $\mathcal P^{\mathrm{conc}}$. This problem is given by 
\begin{align}
\min \quad &
\sum_{t=1}^{T}
\left(
f_t y_t+c_t(x_t)+H_t(I_t)
\right)\label{problem2:Objective}
\\
\text{\rm s.t., }&\text{\eqref{problem:balance}--\eqref{problem:zero-inventory}}\nonumber 
\end{align} 

We assume that $c_t$ and $H_t$ are concave on their respective domains and that $c_t(0)=H_t(0)=0$ without loss of generality.
 
For the algorithmic statements below, we additionally assume that their values at rational arguments can be evaluated exactly in polynomial time and have polynomially bounded binary encoding length. This condition holds, for example, for piecewise linear concave functions described by a polynomial number of rational breakpoints.

The following elementary observation is the key to the extension.

\begin{lemma} 
\label{lem:concave-extreme point}
Fix a setup vector $y\in\{0,1\}^{T}$. If $c_t$ and $H_t$ are concave for every $t$, then the fixed-setup problem associated with $\mathcal P^{\mathrm{conc}}$ admits an optimal solution that is an extreme point of $\mathcal{X}(y)$.
\end{lemma}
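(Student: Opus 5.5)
With $y$ fixed, the setup term $\sum_t f_t y_t$ is a constant. The remaining objective
\[
\Phi(x):=\sum_{t=1}^{T}\Big(c_t(x_t)+H_t\big(\textstyle\sum_{u\le t}x_u-D_t\big)\Big)
\]
is a function of $x$ alone over the polytope $\mathcal{X}(y)$ from \eqref{eq:fixed-setup-polytope}. The plan is to invoke the classical fact that a concave function attains its minimum over a nonempty bounded polytope at an extreme point.

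First, I will check that $\Phi$ is concave on $\mathcal{X}(y)$. Each $c_t(x_t)$ is a concave function composed with a coordinate projection, hence concave in $x$. Each $I_t=P_t-D_t$ is an affine function of $x$, so $H_t(I_t)$ is a concave function composed with an affine map, again concave. On $\mathcal{X}(y)$ we have $x_t\ge 0$ and $I_t\ge 0$, so all arguments stay in the domains where $c_t$ and $H_t$ are assumed concave. A nonnegative sum of concave functions is concave.

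Second, I will check that $\mathcal{X}(y)$ is a nonempty polytope. It is defined by finitely many linear constraints and lies in the box $\prod_t[0,U_t]$, so it is bounded. If it is empty, the fixed-setup problem has no feasible solution and there is nothing to prove. Otherwise it is compact, and by Minkowski's theorem it equals the convex hull of its finitely many extreme points $v^1,\dots,v^m$.

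Third, I will show that some extreme point is a minimizer. Take any $x\in\mathcal{X}(y)$ and write $x=\sum_k\lambda_k v^k$ with $\lambda_k\ge 0$ and $\sum_k\lambda_k=1$. Concavity gives
\[
\Phi(x)\ge\sum_k\lambda_k\Phi(v^k)\ge\min_k\Phi(v^k).
\]
Hence the extreme point $v^{k^*}$ that achieves $\min_k\Phi(v^k)$ is optimal for the fixed-setup problem. Because the set of extreme points is finite, this minimum exists, so no continuity or attainment argument is needed.

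The only delicate step is the domain and continuity issue. Concave functions on closed domains may fail to be continuous at the boundary, for example at $x_t=0$. The argument above relies only on the concavity inequality and the finiteness of the vertex set, so it avoids this problem. I just need to note explicitly that the domains contain $[0,U_t]$ and $[0,D_T]$, so that $\Phi$ is defined and concave on all of $\mathcal{X}(y)$.
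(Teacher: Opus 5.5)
Your proof is correct and follows the same route as the paper: write feasible points of the polytope $\mathcal{X}(y)$ as convex combinations of its finitely many extreme points and apply the concavity inequality to the objective. The one refinement is that you compare an arbitrary feasible $x$ with the best vertex, which proves that a minimizer exists. The paper instead starts from an optimal $x^\star$ whose existence it does not justify, and you also make the nonemptiness and domain checks explicit.
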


\begin{proof}
For a fixed setup vector $y$, the feasible production vectors belong to the polytope $\mathcal{X}(y)$ defined in \eqref{eq:fixed-setup-polytope}. Using $I_t=\sum_{u=1}^{t}x_u-D_t$,  the objective, apart from the constant setup term $\sum_{t=1}^{T}f_ty_t$, can be written as
\begin{equation*}
\Psi_y(x)
=
\sum_{t=1}^{T}
\left[
c_t(x_t)
+
H_t
\left(
\sum_{u=1}^{t}x_u-D_t
\right)
\right]. 
\end{equation*}
Since each $c_t$ and $H_t$ is concave and the inventory levels are affine functions of $x$, the function $\Psi_y$ is concave on
$\mathcal{X}(y)$.

Let $x^\star$ be an optimal solution. Since $\mathcal{X}(y)$ is a polytope, $x^\star$ can be expressed as a convex combination of its
extreme points: $x^\star = \sum_{q=1}^{m}\lambda_q v^q$ where $\lambda_q\geq0$ and  $\sum_{q=1}^{m}\lambda_q=1$.
 
By concavity,
$$
\Psi_y(x^\star)
\geq
\sum_{q=1}^{m}
\lambda_q\Psi_y(v^q).
$$
Therefore, at least one extreme point $v^q$ satisfies
$$
\Psi_y(v^q)
\leq
\Psi_y(x^\star).
$$
Since $x^\star$ is optimal, equality must hold. Hence, an optimal extreme point exists.
\end{proof}

Lemma~\ref{lem:concave-extreme point} shows that the extreme point boundary argument extends from linear to separable concave production and inventory costs.


\begin{theorem} 
\label{thm:concave-grid}
Suppose that $c_t$ and $H_t$ are concave for every
$t\in\mathcal{T}$. Then problem~$\mathcal P^{\mathrm{conc}}$ admits an
optimal solution such that, in every regeneration interval, at most
one production quantity is strictly between its production
boundaries. Consequently, there exists an optimal solution whose
cumulative production trajectory satisfies $P_t\in G_t$ for all $t\in\mathcal{T}\cup\{0\}$,   where $G_t$ is the same additive grid defined in
Section~\ref{sec:grid-theory}.
\end{theorem}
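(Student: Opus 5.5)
The plan is to reduce Theorem~\ref{thm:concave-grid} to the two structural results already proved for the linear model, Theorem~\ref{thm:extreme point} and the argument of the Main Grid Theorem (Theorem~\ref{thm:main-grid}). Neither result uses the objective: the first is purely about the geometry of $\mathcal{X}(y)$, and the second only needs an optimal solution all of whose regeneration intervals contain at most one partially utilized period. So the task is to produce such an optimal solution for $\mathcal P^{\mathrm{conc}}$.

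First, I would argue that $\mathcal P^{\mathrm{conc}}$ attains its optimum. There are finitely many setup vectors $y\in\{0,1\}^T$, and we only need those with $\mathcal{X}(y)\neq\varnothing$; feasibility of the instance guarantees at least one. For each such $y$, $\mathcal{X}(y)$ is a nonempty polytope. On it, the fixed-setup objective $\Psi_y$ is a concave function. Its minimum over the polytope is attained at a vertex, because a concave function on a polytope is bounded below by its minimum over the finitely many vertices, via the convex-combination inequality used in Lemma~\ref{lem:concave-extreme point}. I would phrase this directly in that form, so that no continuity assumption is needed at the boundary of the domain. (Concave functions may jump at endpoints such as $x_t=0$, but the vertex inequality still holds pointwise.) Hence the optimal value $\min_y\bigl(\sum_t f_ty_t+\min_{x\in\mathcal{X}(y)}\Psi_y(x)\bigr)$ is attained. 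Choose $y^\star$ attaining it and, by Lemma~\ref{lem:concave-extreme point}, an optimal extreme point $x^\star$ of $\mathcal{X}(y^\star)$. Then $(x^\star,y^\star)$ together with the induced inventories is optimal for $\mathcal P^{\mathrm{conc}}$. This is the step where some care is needed, so I would state explicitly that Lemma~\ref{lem:concave-extreme point} yields existence, not merely a bound.

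Next, I apply Theorem~\ref{thm:extreme point} to $x^\star\in\mathcal{X}(y^\star)$. Each regeneration interval of $x^\star$ then contains at most one period with $L_ty^\star_t<x^\star_t<U_ty^\star_t$, and all other periods satisfy $x^\star_t\in\{0,L_t,U_t\}=\mathcal{B}_t$. This gives the first claim.

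Finally, I repeat verbatim the proof of Theorem~\ref{thm:main-grid} for $x^\star$. For each regeneration interval $(\ell,r]$ with possible exceptional period $k$, periods $\ell\le t<k$ give $P_t\in D_\ell+\mathcal{A}(\{\ell+1,\dots,t\})\subseteq G_t^+$. Periods $k\le t\le r$ give $P_t\in D_r-\mathcal{A}(\{t+1,\dots,r\})\subseteq G_t^-$. If there is no exceptional period, every $t$ in the interval is handled by the forward representation. Feasibility gives $D_t\le P_t\le D_T$, so $P_t\in G_t$ for all $t$. The grids are unchanged because they depend only on $D$ and the boundary sets $\mathcal{B}_t$, not on costs.
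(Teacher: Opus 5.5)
Your proposal is correct and follows essentially the same route as the paper's proof: take the setup vector $y^\star$ of an optimal solution, use Lemma~\ref{lem:concave-extreme point} to obtain an optimal extreme point of $\mathcal{X}(y^\star)$, invoke Theorem~\ref{thm:extreme point}, and rerun the purely structural argument of Theorem~\ref{thm:main-grid}. Your one addition is the explicit argument that an optimum is attained, using finitely many setup vectors and the vertex inequality for a concave function on a polytope (which needs no continuity); this supplies a step the paper takes for granted when it writes ``let $y^\star$ be the setup vector of an optimal solution.''
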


\begin{proof}

Let $y^\star$ be the setup vector of an optimal solution of problem~$\mathcal P^{\mathrm{conc}}$. By Lemma~\ref{lem:concave-extreme point}, there exists an optimal production vector $x^\star$ that is an extreme point of $\mathcal{X}(y^\star)$.

Theorem \ref{thm:extreme point} therefore implies that, in every regeneration interval, all but at most one production quantity belong to $\mathcal{B}_t=\{0,L_t,U_t\}$.

The proof of the MGT is purely structural and applies to any solution satisfying this boundary property. Hence, the corresponding cumulative production trajectory satisfies $P_t\in G_t$ for all  $t\in\mathcal{T}\cup\{0\}$. 
\end{proof}

The GDP algorithm can also be extended directly.
Define
\begin{equation*}
\phi_t^{\mathrm{conc}}(x)
:=
\begin{cases}
0,
& x=0,
\\[1mm]
f_t+c_t(x),
& x>0 \text{ and } L_t\leq x\leq U_t,
\\[1mm]
+\infty,
& \text{otherwise}.
\end{cases} 
\end{equation*}
For every $s\in G_t$, the recurrence becomes
\begin{equation}
F_t(s)
=
H_t(s-D_t)
+
\min_{r\in G_{t-1}}
\left\{
F_{t-1}(r)
+
\phi_t^{\mathrm{conc}}(s-r)
\right\}.
\label{eq:concave-grid-dp}
\end{equation}

Unlike the linear cost recurrence \eqref{eq:accelerated-recurrence}, the term $c_t(s-r)$ cannot in general be decomposed into a part depending only on $s$ and a part depending only on $r$. Consequently, the monotone deque acceleration of Section~\ref{subsec:grid-dp-complexity} does not apply in general.

Let $N:=\max_{0\leq t\leq T}|G_t|$.  A direct evaluation of~\eqref{eq:concave-grid-dp} requires a running time 
\begin{equation*}
O
\left(
\sum_{t=1}^{T}
|G_{t-1}||G_t|
\right)
\subseteq
O(TN^2). 
\end{equation*} 
Combining this observation with the polynomial grid theory yields the
following consequence.

\begin{corollary} 
\label{cor:concave-polynomiality}
Consider a family of instances of
problem~$\mathcal P^{\mathrm{conc}}$ whose boundary resource profiles have
bounded additive dimension. If the concave cost functions can be
evaluated exactly in polynomial time, then the family is solvable
exactly in polynomial time by GDP.
\end{corollary}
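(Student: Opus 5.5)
The plan is to chain three earlier results: Theorem~\ref{thm:concave-grid} for correctness, Lemma~\ref{lem:grid-compression} and Theorem~\ref{thm:general-polynomiality} for the size of the state space, and the direct recurrence~\eqref{eq:concave-grid-dp} for the running time.

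\textbf{Step 1 (correctness of the concave recurrence).} By Theorem~\ref{thm:concave-grid}, some optimal solution of $\mathcal P^{\mathrm{conc}}$ has its cumulative trajectory in $G_t$ for every $t$. I will repeat the induction in the proof of Theorem~\ref{thm:grid-dp}, with $\phi_t$ replaced by $\phi_t^{\mathrm{conc}}$ and $h_t(s-D_t)$ replaced by $H_t(s-D_t)$. This shows that $F_T(D_T)$ computed by~\eqref{eq:concave-grid-dp} equals the optimum over grid-supported trajectories, and hence the true optimum.

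Two small points need checking. First, $\phi^{\mathrm{conc}}_t(0)=0$ is legitimate because $c_t(0)=0$ and $f_t\ge 0$, so no setup is needed when nothing is produced. Second, Proposition~\ref{prop:grid-enlargement} carries over verbatim. It only uses that $G$ lies inside the envelope and that every finite-cost path is feasible, and both facts are independent of the cost structure. So I may run the recurrence on $\widehat G\cap[D_t,D_T]$ in period $t$ instead of on the exact grids.

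\textbf{Step 2 (polynomial envelope).} Bounded additive dimension gives, through Lemma~\ref{lem:grid-compression} and the construction in the proof of Theorem~\ref{thm:general-polynomiality}, an envelope $\widehat G\supseteq G$. Its size is $|\widehat G|=O(T^{1+\kappa})$ with $\kappa=\sum_i(c_i+1)$, and it is generated in polynomial time with polynomially bounded encoding lengths. These facts depend only on the boundary profile, not on the costs, so they transfer unchanged.

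\textbf{Step 3 (running time).} The direct evaluation of~\eqref{eq:concave-grid-dp} over the envelope uses $O(T|\widehat G|^2)=O(T^{3+2\kappa})$ transitions. Each transition requires one evaluation of $c_t$ at the rational point $s-r$, one evaluation of $H_t$ at $s-D_t$, a comparison against $L_t$ and $U_t$, and additions and minimizations.

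I expect the only genuine obstacle to be the bit complexity. All arguments $s-r$ and $s-D_t$ are differences of envelope values, so their encoding lengths are polynomially bounded. By the stated assumption, each cost evaluation is then exact, takes polynomial time, and returns a value of polynomial length. Each $F_t(s)$ is a sum of at most $2T$ such values plus setup costs, so it also has polynomially bounded length. Hence every arithmetic operation is polynomial, and the whole algorithm runs in polynomial time.
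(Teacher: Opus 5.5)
Your proposal is correct and follows the same route as the paper: Theorem~\ref{thm:concave-grid} places an optimal trajectory on the grid, Theorem~\ref{thm:general-polynomiality} supplies the polynomial envelope $\widehat{G}$, and~\eqref{eq:concave-grid-dp} is evaluated directly over $\widehat{G}\cap[D_t,D_T]$ with $O(T|\widehat{G}|^2)$ polynomially evaluable transitions. You also check explicitly that the induction of Theorem~\ref{thm:grid-dp} and Proposition~\ref{prop:grid-enlargement} carry over to concave costs, and you track the bit lengths; the paper leaves both of these details implicit.
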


\begin{proof}
By Theorem~\ref{thm:concave-grid}, an optimal solution exists on the
same additive grid as in the linear cost model. Moreover, by Theorem~\ref{thm:general-polynomiality}, bounded additive dimension
provides a polynomially constructible grid envelope
$\widehat{G}$ of polynomial cardinality. 

Applying the recurrence~\eqref{eq:concave-grid-dp} to the feasible states $\widehat{G}\cap[D_t,D_T]$ in period $t$ examines at most $|\widehat{G}|^2$ predecessor--successor pairs per period.
Therefore, the total number of transitions is polynomial in the input size. Since every transition cost can also be evaluated in polynomial time, the complete algorithm is polynomial.
\end{proof}

More explicitly,  
the direct concave cost dynamic program   has the generic
bound
$
O
\left(
T|\widehat{G}|^2
\right)
=
O
\left(
T^{3+2\kappa}
\right), 
$
up to the polynomial cost of grid generation, sorting, and exact evaluation of the cost functions. Thus, linear costs are needed for the accelerated $O(T|\widehat{G}|)$ implementation developed in Section~\ref{sec:grid-dp}, but they are not needed for either the MGT or the polynomiality mechanism itself.

\section{Polynomial Families}
\label{sec:polynomial-families}

The GPT provides a common mechanism for
establishing polynomial solvability. Rather than deriving a different
algorithm for every particular resource profile, it is sufficient to
identify a low-dimensional additive representation of the production
boundaries.

This section illustrates the resulting principle on several families
of dynamic lot-sizing problems. In each case, we identify the additive
coordinates, construct a polynomial grid envelope, and invoke the GPT. For some families, the generic bound of
the GPT can be sharpened by exploiting
additional structure. We do so whenever a simpler exact or enclosing
set of boundary sums is available.

\subsection{Uncapacitated baseline}
\label{subsec:uncapacitated}

Before considering structured capacity profiles, it is useful to
recover the classical uncapacitated lot-sizing problem. Set $L_t=0$ and introduce the redundant upper boundary $U_t=D_T$ for all $t\in\mathcal{T}$. Since total production over the horizon equals $D_T$, this upper
boundary does not remove any feasible solution.

In this case, the only relevant cumulative production levels generated
by the positive and negative grid constructions are cumulative demand
anchors. Hence, a common grid envelope is
\[
H^{\mathrm{unc}}
=
\{D_0,D_1,\ldots,D_T\},
\]
and therefore $|H^{\mathrm{unc}}|=O(T)$.

\begin{corollary} 
\label{cor:uncapacitated}
The uncapacitated single-item dynamic lot-sizing problem with linear costs can be solved by GDP in $O(T^2)$ time and $O(T)$ working memory for computing the optimal value.
\end{corollary}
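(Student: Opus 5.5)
The plan is to verify that $H^{\mathrm{unc}}=\{D_0,\ldots,D_T\}$ is a valid grid envelope. I will argue through the extreme-point structure directly rather than through the exact grid $G$. The exact grid with $U_t=D_T$ contains sums such as $D_j+kD_T$, which mostly exceed $D_T$. It also contains values like $D_j-D_T$, which are cut off by the interval $[0,D_T]$. So instead of checking $G\subseteq H^{\mathrm{unc}}$ literally, I will show that an optimal trajectory lies in $H^{\mathrm{unc}}$. Then I will apply the correctness argument of Proposition~\ref{prop:grid-enlargement}, which only needs some optimal trajectory inside the envelope.

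First, take an optimal extreme solution, which exists by Theorem~\ref{thm:extreme point} together with the fact that the linear objective attains its optimum at an extreme point of $\mathcal{X}(y^\star)$. In any regeneration interval $(\ell,r]$ we have $\sum_{t=\ell+1}^r x_t=D_r-D_\ell\le D_T$. Every $x_t$ lies in $\mathcal{B}_t=\{0,D_T\}$ except at most one.

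Second, a production period with $x_t=D_T$ forces all other production to be zero, so $D_r-D_\ell=D_T$. That period is then the only production period of its interval. If there is no production at $U_t$, at most one period in the interval produces, and it produces exactly $D_r-D_\ell$. In either case the interval has a single production period $k$. Hence $P_t=D_\ell$ for $\ell\le t<k$ and $P_t=D_r$ for $k\le t\le r$. Thus every $P_t$ lies in $H^{\mathrm{unc}}$.

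Third, I run GDP on $H^{\mathrm{unc}}\cap[D_t,D_T]$. The soundness half of Proposition~\ref{prop:grid-enlargement} shows that every finite path is feasible, and the previous step shows that an optimal path survives. By Proposition~\ref{prop:grid-dp-complexity} with the common grid of size $N_H=T+1$, the time is $O(T\log T+T\cdot T)=O(T^2)$. Sorting is in fact free, since $D_0\le\cdots\le D_T$ and duplicates can be merged in linear time. Two layers plus the deque give $O(T)$ working memory.

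The main obstacle is the mismatch noted at the start: $H^{\mathrm{unc}}$ does not literally contain $G$ when the redundant bound is used. I handle it by invoking the envelope proposition only through its proof, which needs nothing more than an optimal trajectory contained in $H$.
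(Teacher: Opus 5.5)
Your proof is correct. The complexity half matches the paper's: a common grid of $T+1$ anchors, already sorted since $D_0\le\cdots\le D_T$, the accelerated recurrence, and two layers plus the deque.

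Where you differ is in justifying the state set. The paper simply asserts that the positive and negative grid constructions generate only cumulative-demand anchors. Thus $H^{\mathrm{unc}}$ is a grid envelope and Proposition~\ref{prop:grid-enlargement} applies verbatim. You bypass $G$ entirely. From the extreme-point structure and $\sum_t x_t=D_T$, you show that an optimal trajectory has at most one producing period per regeneration interval and hence visits only anchors. You then need only the soundness half of Proposition~\ref{prop:grid-enlargement}. Your route is self-contained and gives the stronger Wagner--Whitin structure of the optimum itself. The paper's route keeps the corollary a one-line instance of the envelope machinery.

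The ``mismatch'' that motivates your detour is not real. With $\mathcal{B}_t=\{0,D_T\}$ and $D_T>0$, the intersection with $[D_t,D_T]$ in the definition of $G_t$ removes exactly the out-of-range values you list. A value $D_j+kD_T$ with $k\ge1$ survives only if $k=1$ and $D_j=0$, giving $D_T$. A value $D_j-kD_T$ with $k\ge1$ survives only if $k=1$ and $D_j=D_T$, giving $0=D_0$. Hence $G_t\subseteq\{D_t,\ldots,D_T\}$, and $G\subseteq H^{\mathrm{unc}}$ holds literally.

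Your ``single producing period'' claim should also allow two harmless edge cases. One is an interval with no production at all, which forces $r=\ell+1$ and $D_r=D_\ell$. The other is the trivial instance $D_T=0$.
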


\begin{proof}
The common cumulative production grid contains $O(T)$ states. The
accelerated recurrence of Section~\ref{sec:grid-dp} therefore requires
$O(T|H^{\mathrm{unc}}|)=O(T^2)$ time.
\end{proof}

This recovers, within the Grid Theory framework, the classical
quadratic order of the Wagner--Whitin dynamic program
\cite{WagnerWhitin1958}. The same collapse of cumulative production
states to cumulative demand levels is also observed in the
forward--backward construction of K{\i}rca \cite{Kirca1990}.

\subsection{Constant capacity}
\label{subsec:constant capacity}

Consider first the classical capacitated lot-sizing problem with
constant capacity: $L_t=0$ and  $U_t=C$ for all $t\in\mathcal{T}$.

The production boundary set is identical in every period, i.e., $\mathcal{B}_t=\{0,C\}$.  Every boundary sum is therefore of the form $kC$ with $k\in\{0,\ldots,T\}$.  Thus, the exact boundary sum set is contained in $\mathcal{A}^{C}
:=
\{kC:k=0,\ldots,T\}$,  which contains at most $T+1$ elements.

A common cumulative production grid envelope is consequently
\begin{equation*}
H^{C}
:=
\left\{
D_j+kC:
j\in\{0,\ldots,T\},
\;
-T\leq k\leq T
\right\}
\cap[0,D_T]. 
\end{equation*}
Hence,
\begin{equation*}
|H^{C}|
\leq
(T+1)(2T+1)
=
O(T^2). 
\end{equation*}
The same result can be viewed through additive dimension. Indeed, the boundary resource profile admits the one-dimensional representation $b=Cg(t,b)$,  where $g(t,0)=0$ and $g(t,C)=1$. Thus, $\operatorname{adim}(\mathcal{B}) \leq 1$. 

\begin{corollary} 
\label{cor:constant capacity}
The single-item dynamic lot-sizing problem with constant capacity can be solved by the GDP algorithm in $O(T^3)$ time and $O(T^2)$ working memory for computing the optimal value when the common grid $H^{C}$ is used.
\end{corollary}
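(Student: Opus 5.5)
The argument has three steps: show that $H^{C}$ is a grid envelope, count its elements, and then invoke the accelerated common-grid implementation of the GDP algorithm.

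\textbf{Envelope property.} Since $\mathcal{B}_t=\{0,C\}$ for every $t$, a boundary sum over an interval $J$ equals $kC$, where $k\le |J|\le T$ is the number of selected periods. Hence $\mathcal{A}(J)\subseteq\mathcal{A}^{C}$ for every interval $J$. By the definitions \eqref{eq:positive-grid} and \eqref{eq:negative-grid}:
\begin{itemize}
\item every point of $G_t^{+}$ has the form $D_j+kC$ with $0\le k\le T$;
\item every point of $G_t^{-}$ has the form $D_j-kC$ with $0\le k\le T$.
\end{itemize}
After intersecting with $[D_t,D_T]\subseteq[0,D_T]$, we get $G_t\subseteq H^{C}$ for all $t$, and hence $G\subseteq H^{C}$. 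So $H^{C}$ is a grid envelope in the sense of Definition~\ref{def:grid-envelope}. Alternatively, this follows from Lemma~\ref{lem:grid-compression} with the one-dimensional representation $\theta_1=C$, $c_1=0$, although the direct argument gives the sharper range $|k|\le T$.

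\textbf{Counting and construction.} $H^{C}$ is indexed by pairs $(j,k)$ with $0\le j\le T$ and $-T\le k\le T$, so $|H^{C}|\le (T+1)(2T+1)=O(T^2)$. It can be enumerated in $O(T^2)$ arithmetic operations once $D_0,\dots,D_T$ are computed in $O(T)$.

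\textbf{Running the GDP.} By Proposition~\ref{prop:grid-enlargement}, running the GDP over $H^{C}\cap[D_t,D_T]$ in period $t$ returns the optimal value. For the complexity, use the common-grid variant discussed after Proposition~\ref{prop:grid-dp-complexity}, with $N_G=|H^{C}|=O(T^2)$:
\begin{itemize}
\item the running time is $O(N_G\log N_G+TN_G)=O(T^2\log T+T^3)=O(T^3)$;
\item the working memory is $O(N_G)=O(T^2)$, covering two layers plus the deque.
\end{itemize}

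The only delicate point is the sorting term, which must not dominate. It is $O(T^2\log T)$, absorbed by $T^3$. Alternatively, sort in $O(T^2)$ by merging the $T+1$ arithmetic progressions, each already sorted in $k$. Apart from that, the proof is bookkeeping.
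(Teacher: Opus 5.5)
Your proposal is correct and follows the same route as the paper. Boundary sums are multiples $kC$ with $0\le k\le T$, so $H^{C}$ is a grid envelope with $|H^{C}|\le (T+1)(2T+1)$, and the common-grid GDP of Proposition~\ref{prop:grid-dp-complexity} gives $O(T|H^{C}|)=O(T^3)$ time and $O(T^2)$ memory, with your explicit absorption of the $O(T^2\log T)$ sorting term a welcome addition. Two side remarks are slightly off but harmless: Lemma~\ref{lem:grid-compression} with $c_1=0$ already gives $|\beta_1|\le T^{c_1+1}=T$, so $\widehat{G}=H^{C}$ and the direct argument is not sharper, and a generic merge of $T+1$ sorted lists costs $O(T^2\log T)$, so an $O(T^2)$ sort would require exploiting the common step $C$.
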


\begin{proof}
The grid envelope contains $O(T^2)$ states. Once sorted, Proposition~\ref{prop:grid-dp-complexity} yields $O(T|H^{C}|) = O(T^3)$ time and $O(T^2)$ memory.
\end{proof}

The constant capacity case is classical. Florian and Klein \cite{FlorianKlein1971} obtained an $O(T^4)$ algorithm under concave production and inventory costs, while van Hoesel and Wagelmans \cite{vanHoeselWagelmans1996} subsequently obtained an $O(T^3)$ algorithm for concave production costs and linear holding costs. Corollary~\ref{cor:constant capacity} recovers the cubic order for the linear cost model through the one-dimensional additive structure of the capacity profile. Under the generic concave cost recurrence of Section~\ref{subsec:concave costs}, the same $O(T^2)$ grid leads to an $O(T^5)$ bound; this generic bound is not intended to improve the specialized constant capacity algorithms.

\subsection{Constant minimum order quantity}
\label{subsec:constant-moq}

We next consider an uncapacitated dynamic lot-sizing problem with a constant minimum order quantity $L>0$. Positive production must satisfy $x_t\geq L$.

Since total production over the complete horizon is $D_T$, no period needs to produce more than $D_T$. Therefore, without loss of generality, the model can be represented within problem~$\mathcal P$ by setting $L_t=L$  and $U_t=D_T$ for all $t\in\mathcal{T}$. The production boundary sets are then $\mathcal{B}_t=\{0,L,D_T\}$.

Consider a boundary sum that lies in the relevant interval $[0,D_T]$. If it contains no occurrence of $D_T$, it has the form $kL$ with $0\leq k\leq T$. If it contains one occurrence of $D_T$, then no positive additional boundary quantity can be added without exceeding $D_T$. Two occurrences of $D_T$ also exceed the relevant interval. It is therefore sufficient to use
\begin{equation*}
\mathcal{A}^{L}
:=
\{kL:k=0,\ldots,T\}
\cup
\{D_T\}. 
\end{equation*}
Hence, $|\mathcal{A}^{L}|=O(T)$.

A common grid envelope is
\begin{equation*}
H^{L}
:=
\left\{
D_j+\sigma a:
j\in\{0,\ldots,T\},
\;
\sigma\in\{-1,1\},
\;
a\in\mathcal{A}^{L}
\right\}
\cap[0,D_T]. 
\end{equation*}
Consequently, $|H^{L}|=O(T^2)$.

\begin{corollary} 
\label{cor:constant-moq}
The uncapacitated single-item dynamic lot-sizing problem with a constant minimum order quantity can be solved by the GDP algorithm in $O(T^3)$ time and $O(T^2)$ working memory for computing the optimal value.
\end{corollary}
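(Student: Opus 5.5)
The plan mirrors the constant-capacity argument: show that $H^{L}$ is a grid envelope in the sense of Definition~\ref{def:grid-envelope}, and then apply Proposition~\ref{prop:grid-enlargement} together with Proposition~\ref{prop:grid-dp-complexity}. The key claim is that every point of the exact grid $G$ lies in $H^{L}$.

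Take $s\in G$, so $s\in G_t$ for some $t$. Then either $s=D_j+a$ with $a\in\mathcal{A}(\{j+1,\ldots,t\})$, or $s=D_j-a$ with $a\in\mathcal{A}(\{t+1,\ldots,j\})$. In both cases $s\in[D_t,D_T]\subseteq[0,D_T]$.

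\emph{Positive grid.} Since $D_j\ge 0$ and $s\le D_T$, we get $a\le D_T$. Write $a=kL+mD_T$, where $k$ counts the periods choosing $L$ and $m$ counts those choosing $D_T$.
\begin{itemize}
\item If $m\ge 2$, then $a>D_T$ whenever $D_T>0$, which is impossible.
\item If $m=1$, then $a\le D_T$ forces $kL=0$, so $k=0$ because $L>0$, and $a=D_T\in\mathcal{A}^{L}$.
\item If $m=0$, then $a=kL$ with $0\le k\le T$, so again $a\in\mathcal{A}^{L}$.
\end{itemize}

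\emph{Negative grid.} Since $s\ge D_t\ge 0$, we get $a\le D_j\le D_T$. The same case analysis gives $a\in\mathcal{A}^{L}$.

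\emph{Degenerate case.} The case $D_T=0$ is trivial, since then every grid point is $0=D_0\in H^{L}$.

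Hence $G\subseteq H^{L}$, and $H^{L}$ is a grid envelope. This case analysis is the only real content of the proof. The step to watch is that the excluded boundary sums are genuinely impossible, rather than merely large: positivity of $D_j$ handles the positive grid, and the bound $D_j\le D_T$ handles the negative grid.

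\emph{Cardinality and construction.} The set $\mathcal{A}^{L}$ has at most $T+2$ elements. With $T+1$ anchors and two signs, $|H^{L}|\le 2(T+1)(T+2)=O(T^2)$. Generating $H^{L}$ takes $O(T^2)$ arithmetic operations, and sorting it takes $O(T^2\log T)$.

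\emph{Running time.} By Proposition~\ref{prop:grid-enlargement}, running GDP on $H^{L}\cap[D_t,D_T]$ in each period returns the optimal value. Using the common-grid implementation of Section~\ref{sec:grid-dp} with the monotone deque, Proposition~\ref{prop:grid-dp-complexity} gives $O(T|H^{L}|)=O(T^3)$ time. The transition window here is $[s-D_T,s-L]$, which still moves monotonically with $s$, so the deque argument applies. Adding the sorting cost $O(T^2\log T)$ keeps the total at $O(T^3)$. The working memory consists of two DP layers and the deque over the common grid, which is $O(T^2)$.
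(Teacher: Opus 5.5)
Your proposal is correct and takes the same route as the paper. The paper also reduces boundary sums in $[0,D_T]$, according to whether they contain zero, one, or two occurrences of $D_T$, to the set $\mathcal{A}^{L}$. It concludes that $H^{L}$ is a grid envelope of size $O(T^2)$ and then applies Proposition~\ref{prop:grid-dp-complexity}. Your argument that $a\le D_T$ in both the positive and negative grids spells out a step the paper only asserts.
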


\begin{proof}
The set $H^{L}$ is a grid envelope of cardinality $O(T^2)$. Proposition~\ref{prop:grid-dp-complexity} therefore gives an $O(T^3)$ implementation.
\end{proof}

This result illustrates the advantage of working with production boundaries rather than capacities alone. The same Grid Theory applies when the relevant resource is a lower production boundary.

Polynomial algorithms for uncapacitated lot-sizing models with minimum order quantities have also been developed previously. In particular, Okhrin and Richter \cite{OkhrinRichter2011b} obtain an $O(T^2)$ algorithm for a special linear MOQ model. The purpose of Corollary~\ref{cor:constant-moq} is therefore not to establish the first polynomiality result for a constant MOQ. Rather, it shows that a constant lower production boundary generates an $O(T^2)$ cumulative production grid under the general linear cost structure considered in this paper.

\subsection{Constant capacity and constant minimum order quantity}
\label{subsec:constant capacity-moq}

Consider now the simultaneous restrictions, i.e., $L_t=L$ and  $U_t=C$ with $0<L\leq C$ for all $t\in\mathcal{T}$.

The common production boundary set is $\mathcal{B}_t=\{0,L,C\}$. Every boundary sum can be written as $kL+\ell C$,  where $k$ denotes the number of periods contributing their lower boundary and $\ell$ the number of periods contributing their upper boundary. Necessarily, $k\geq0$, $\ell\geq0$, and $k+\ell\leq T$. Hence, it is sufficient to consider
\begin{equation*}
\mathcal{A}^{L,C}
:=
\left\{
kL+\ell C:
k,\ell\in\mathbb{Z}_+,\;
k+\ell\leq T
\right\}
\cap[0,D_T]. 
\end{equation*}
Its cardinality satisfies $|\mathcal{A}^{L,C}|
\leq
(T+1)^2
=
O(T^2)$, and the  corresponding common grid envelope is
\begin{equation*}
H^{L,C}
:=
\left\{
D_j+\sigma a:
j\in\{0,\ldots,T\},
\;
\sigma\in\{-1,1\},
\;
a\in\mathcal{A}^{L,C}
\right\}
\cap[0,D_T], 
\end{equation*}
and therefore $|H^{L,C}|=O(T^3)$.

Equivalently, the boundary profile admits a two-dimensional additive representation with basis values $L$ and $C$.

\begin{corollary} 
\label{cor:constant capacity-moq}
The single-item dynamic lot-sizing problem with constant capacity $C$ and constant minimum order quantity $L$ can be solved by the GDP algorithm in $O(T^4)$ time and $O(T^3)$ working memory for computing the optimal value.
\end{corollary}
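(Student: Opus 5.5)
The plan is to follow the pattern of Corollaries~\ref{cor:constant capacity} and~\ref{cor:constant-moq}: show that $H^{L,C}$ is a grid envelope in the sense of Definition~\ref{def:grid-envelope}, bound its cardinality, and then apply Propositions~\ref{prop:grid-enlargement} and~\ref{prop:grid-dp-complexity}.

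\textbf{Envelope property.} Take any $s\in G$. By construction there is some $t$ with $s\in G_t^{+}\cup G_t^{-}$. Hence $s=D_j+\sigma a$ for an anchor $j$, a sign $\sigma\in\{-1,1\}$, and a boundary sum $a\in\mathcal{A}(J)$, where $J$ is an interval of at most $T$ periods. Each $b_u\in\{0,L,C\}$, so letting $k$ count the choices $b_u=L$ and $\ell$ count the choices $b_u=C$ gives $a=kL+\ell C$ with $k+\ell\le |J|\le T$.

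It remains to see that $a\le D_T$, so that $a$ survives the intersection with $[0,D_T]$ in the definition of $\mathcal{A}^{L,C}$. This is the one step needing care.

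\begin{itemize}
\item For a forward point, $0\le D_j\le s\le D_T$ gives $a=s-D_j\le D_T$.
\item For a backward point, $s\ge 0$ gives $a=D_j-s\le D_j\le D_T$.
\end{itemize}

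So $a\in\mathcal{A}^{L,C}$, and since $s\in[0,D_T]$ we get $s\in H^{L,C}$. Thus $G\subseteq H^{L,C}$. Alternatively, one can invoke Lemma~\ref{lem:grid-compression} with the two-dimensional representation $\theta_1=L$, $\theta_2=C$, $g_1(t,L)=1$, $g_2(t,C)=1$, all other coordinates zero. The direct count is sharper, so I would use it.

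\textbf{Cardinality.} The set $\mathcal{A}^{L,C}$ has at most $(T+1)^2$ elements, indexed by the pairs $(k,\ell)$. Combining with $T+1$ anchors and two signs gives
\[
|H^{L,C}|\le 2(T+1)^3=O(T^3).
\]
The set can be generated by explicit enumeration in $O(T^3)$ arithmetic operations and sorted in $O(T^3\log T)$ time.

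\textbf{Dynamic program.} By Proposition~\ref{prop:grid-enlargement}, running GDP on $H^{L,C}\cap[D_t,D_T]$ in each period returns the optimal value. Using the common-grid implementation from Proposition~\ref{prop:grid-dp-complexity}, each period costs $O(|H^{L,C}|)$, giving a total of
\[
O\bigl(T\,|H^{L,C}|+|H^{L,C}|\log|H^{L,C}|\bigr)=O(T^4).
\]
Only two layers plus the deque are stored, so working memory is $O(T^3)$. The deque acceleration needs $L_t=L$ and $U_t=C$ only as period-dependent window bounds, which are available here.
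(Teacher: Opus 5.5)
Your proposal is correct and follows the paper's route: you show that $H^{L,C}$ is a grid envelope of cardinality $O(T^3)$ via the $kL+\ell C$ count, then apply Propositions~\ref{prop:grid-enlargement} and~\ref{prop:grid-dp-complexity} with the common-grid implementation. You also explicitly check that every boundary sum that actually occurs satisfies $a\le D_T$, so the truncation in the definition of $\mathcal{A}^{L,C}$ loses nothing; the paper leaves this step implicit.
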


\begin{proof}
The grid envelope $H^{L,C}$ contains $O(T^3)$ cumulative production states. The result follows directly from Proposition~\ref{prop:grid-dp-complexity}.
\end{proof}

The contrast between Corollaries~\ref{cor:constant capacity}, \ref{cor:constant-moq}, and \ref{cor:constant capacity-moq} is instructive. A single constant boundary resource produces an essentially one-dimensional family of boundary sums, whereas two independent constant boundary resources produce a two-dimensional family.

The constant capacity, constant MOQ family has been studied under several cost assumptions. Okhrin and Richter \cite{OkhrinRichter2011a} derive an $O(T^3)$ algorithm under a
restricted linear cost structure. Hellion, Mangione, and Penz \cite{HellionMangionePenz2012,HellionMangionePenz2013} consider concave production and storage costs and obtain a corrected $O(T^6)$ algorithm. Koca, Yaman, and Akturk \cite{KocaYamanAkturk2014} recover the same $O(T^6)$ order by viewing the lower and upper production limits as two time-invariant breakpoints of a piecewise concave production cost function. The general framework of Li and Li \cite{LiLi2016}, with the two special replenishment quantities $L$ and $C$, yields an $O(T^7)$ bound.

Corollary~\ref{cor:constant capacity-moq} should therefore be viewed as a unified consequence of Grid Theory rather than as the first polynomial algorithm for this family. Under the linear cost model studied here, the two additive coordinates $(k,\ell)$ yield an $O(T^3)$ grid and an $O(T^4)$ GDP algorithm. For separable concave costs, the generic recurrence of Section~\ref{subsec:concave costs} yields $O(T^7)$.

\begin{remark}
The terminal inventory conventions are not identical across all of these models. The formulation considered in this paper imposes $I_T=0$. Okhrin and Richter \cite{OkhrinRichter2011a} use the same zero-terminal-inventory convention, whereas minimum-order-quantity models allowing positive terminal inventory are also considered by Hellion, Mangione, and Penz \cite{HellionMangionePenz2012} and Park and Klabjan \cite{ParkKlabjan2015}.
\end{remark}

\subsection{A fixed number of capacity levels}
\label{subsec:fixed-capacity-levels}

Return to the classical capacitated model, i.e., $L_t=0$ and $U_t=C_t$ for all $t \in \mathcal{T}$,  and suppose that the capacities take values in a fixed set $\{q_1,\ldots,q_R\}$,  where $R$ is independent of the planning horizon.

For every $r=1,\ldots,R$, define $n_r
:=
\left|
\{t\in\mathcal{T}:C_t=q_r\}
\right|$. For a subset $S\subseteq\mathcal{T}$, let $\mu_r(S)
:=
\left|
\{t\in S:C_t=q_r\}
\right|$. Then
\begin{equation*}
\sum_{t\in S}C_t
=
\sum_{r=1}^{R}\mu_r(S)q_r, 
\end{equation*}
with $0\leq\mu_r(S)\leq n_r$.

Thus, every capacity subset sum is completely described by the $R$-dimensional count vector $(\mu_1,\ldots,\mu_R)$.

The exact subset sum set is contained in
\begin{equation*}
\mathcal{A}^{R}
:=
\left\{
\sum_{r=1}^{R}\mu_r q_r:
\mu_r\in\{0,\ldots,n_r\},
\;
r=1,\ldots,R
\right\}, 
\end{equation*}
and
\begin{equation*}
|\mathcal{A}^{R}|
\leq
\prod_{r=1}^{R}(n_r+1)
\leq
(T+1)^R. 
\end{equation*}

The boundary resource profile therefore satisfies $\operatorname{adim}(\mathcal{B}) \leq R$.  The common cumulative production grid has cardinality $|H^{R}| = O(T^{R+1})$. 

\begin{corollary} 
\label{cor:fixed-capacity-levels}
For every fixed $R$, the capacitated lot-sizing problem in which $C_t\in\{q_1,\ldots,q_R\}$ for all $t$ is solvable by the GDP algorithm in $O(T^{R+2})$ time.
\end{corollary}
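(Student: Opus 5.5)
The plan is to follow the template of Corollaries~\ref{cor:constant capacity} and \ref{cor:constant capacity-moq}: build an explicit common grid envelope $H^{R}$ from the count-vector description of capacity sums, then apply Proposition~\ref{prop:grid-enlargement} and Proposition~\ref{prop:grid-dp-complexity}.

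\textbf{Step 1: the envelope.} Since $L_t=0$ and $U_t=C_t$, we have $\mathcal{B}_t=\{0,C_t\}$. For any interval $J$, every element of $\mathcal{A}(J)$ therefore equals $\sum_{t\in S}C_t$ for some $S\subseteq J$. By the computation preceding the statement, this sum lies in $\mathcal{A}^{R}$. Define
\[
H^{R}:=\left\{D_j+\sigma a: j\in\{0,\ldots,T\},\ \sigma\in\{-1,1\},\ a\in\mathcal{A}^{R}\right\}\cap[0,D_T].
\]
The argument of Lemma~\ref{lem:grid-compression} then applies verbatim, with $\mathcal{A}^{R}$ in place of $\widehat{\mathcal{A}}$. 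Every point of $G_t^{+}$ has the form $D_j+a$, and every point of $G_t^{-}$ has the form $D_j-a$, with $a\in\mathcal{A}(J)\subseteq\mathcal{A}^{R}$. Hence $G\subseteq H^{R}$, so $H^{R}$ is a grid envelope.

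\textbf{Step 2: cardinality.} From $|\mathcal{A}^{R}|\le\prod_r(n_r+1)\le (T+1)^R$ we get
\[
|H^{R}|\le 2(T+1)^{R+1}=O(T^{R+1}).
\]
The set can be constructed explicitly in $O(T^{R+1})$ arithmetic operations. One counts the $n_r$ in $O(T)$ time, enumerates all count vectors $(\mu_1,\ldots,\mu_R)$ with $0\le\mu_r\le n_r$, forms the sums $\sum_r\mu_r q_r$, and combines each with the $T+1$ anchors and both signs.

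\textbf{Step 3: the dynamic program.} Sorting $H^{R}$ costs $O(T^{R+1}\log T)$. By Proposition~\ref{prop:grid-enlargement}, running GDP on $H^{R}\cap[D_t,D_T]$ in each period returns the optimal value. By the common-grid version of Proposition~\ref{prop:grid-dp-complexity}, the DP phase runs in $O(T\,|H^{R}|)=O(T^{R+2})$ time, and the sorting term is dominated by this.

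\textbf{Main obstacle.} The only delicate point is keeping the sorting term and the construction term inside the $O(T^{R+2})$ bound. Both are $O(T^{R+1}\log T)$, and $\log T\le T$, so this is fine. No additive-dimension machinery beyond the explicit count vectors is needed. The coordinates $\mu_r\le n_r$ are tighter than the generic bound $T^{c_i+1}$ of Lemma~\ref{lem:additive-compression}, which is why the exponent is $R+2$ rather than the $2+\kappa=2+2R$ that the GPT would give directly.
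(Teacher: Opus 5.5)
Your proof is correct and follows the paper's argument. The count vectors $(\mu_1,\ldots,\mu_R)$ give the envelope $\mathcal{A}^{R}$ with $|\mathcal{A}^{R}|\le (T+1)^R$, hence a common grid of size $O(T^{R+1})$, and the accelerated GDP of Proposition~\ref{prop:grid-dp-complexity} then runs in $O(T|H^{R}|)=O(T^{R+2})$ time.

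One correction to your closing aside: the GPT already yields exponent $R+2$ here. The $0/1$ count coordinates satisfy Definition~\ref{def:additive-representation} with $c_r=0$, so $\kappa=R$ rather than $2R$, and the sharper range $0\le\mu_r\le n_r$ only improves constants.
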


The order in which the capacity levels occur is irrelevant to this
result. The profile may switch arbitrarily among the $R$ values.
Hence, the same corollary applies to a fixed number of shifts,
production modes, exogenous operating states, or recurring capacity
levels.

Polynomial solvability with a fixed number of distinct capacity
values also follows from more general frameworks. Li and Li
\cite{LiLi2016} treat the distinct capacities as special replenishment
quantities and obtain an $O(T^{2R+3})$ algorithm. In the broader
minimum concave cost flow setting, Ahmed et al.
\cite{AhmedEtAl2016} obtain an $O(T^{4R+7})$ bound for the
corresponding two-row grid with $R$ distinct production capacity
values.

Grid Theory provides a more direct additive interpretation in the
linear cost setting. The $R$ capacity values generate $R$ count
coordinates, yielding $O(T^{R+1})$ cumulative states and therefore an
$O(T^{R+2})$ GDP algorithm. Under the generic
concave cost recurrence, the same grid gives
$O(T^{2R+3})$, matching the exponent obtained in the general
Li--Li framework.
\subsection{Fixed-degree polynomial capacities}
\label{subsec:polynomial-capacities}

Suppose now that the capacity in period $t$ is given by
\begin{equation}
C_t
=
\sum_{\ell=0}^{d}
\gamma_\ell t^\ell,
\qquad
t\in\mathcal{T},
\label{eq:polynomial capacity}
\end{equation}
where the degree $d$ is fixed independently of $T$, and the
coefficients are such that $C_t\geq0$ over the planning horizon.

For every subset $S\subseteq\mathcal{T}$,
\begin{equation*}
\sum_{t\in S}C_t
=
\sum_{\ell=0}^{d}
\gamma_\ell
\left(
\sum_{t\in S}t^\ell
\right). 
\end{equation*}
Define the power sum coordinates $B_\ell(S)
:=
\sum_{t\in S}t^\ell$ with $\ell=0,\ldots,d$. Since $|S|\leq T$ and $t\leq T$, then $0\leq B_\ell(S)\leq T^{\ell+1}$.  Thus, the capacity profile admits an additive representation with $p=d+1$ coordinates. In particular, $\operatorname{adim}(\mathcal{B}) \leq d+1$. For fixed $d$, these coordinates are polynomial-time computable and have polynomially bounded encoding length, so the representation satisfies Definition \ref{def:additive-representation}.

The number of aggregate coordinate vectors is bounded by
\begin{equation*}
O
\left(
\prod_{\ell=0}^{d}T^{\ell+1}
\right)
=
O
\left(
T^{
\frac{(d+1)(d+2)}{2}
}
\right). 
\end{equation*}
After adding the cumulative demand anchors, the common grid envelope
satisfies
\begin{equation*}
|H^{\mathrm{poly}}|
=
O
\left(
T^{
1+\frac{(d+1)(d+2)}{2}
}
\right). 
\end{equation*}

\begin{corollary} 
\label{cor:polynomial capacity}
For every fixed degree $d$, the capacitated lot-sizing problem with capacity profile~\eqref{eq:polynomial capacity} is solvable by the GDP algorithm in $O
\left(
T^{
2+\frac{(d+1)(d+2)}{2}
}
\right)$  time.
\end{corollary}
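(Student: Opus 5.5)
The plan is to instantiate the General Polynomiality Theorem (Theorem~\ref{thm:general-polynomiality}) with the power sum representation derived just above, and then to count the states of the resulting envelope explicitly so that the common grid implementation of Proposition~\ref{prop:grid-dp-complexity} gives the stated exponent.

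\textbf{Step 1: an additive representation.} I take basis values $\theta_\ell:=\gamma_\ell$ for $\ell=0,\ldots,d$. The coordinate maps are $g_\ell(t,C_t):=t^\ell$ and $g_\ell(t,0):=0$. Then $C_t=\sum_\ell \theta_\ell g_\ell(t,C_t)$ holds by \eqref{eq:polynomial capacity}, and $|g_\ell|\leq T^{\ell}$, so $c_\ell=\ell$. Two remarks are needed to meet Definition~\ref{def:additive-representation}. First, the $\gamma_\ell$ may be negative. The definition only asks for rational basis values, so this is harmless; only the sums $\sum_{t\in S}C_t$ need to be nonnegative, and they are because $C_t\geq0$. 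Second, the data have polynomial encoding length. The $\gamma_\ell$ are part of the input, and each $t^\ell$ has $O(d\log T)$ bits. Hence $\operatorname{adim}(\mathcal B)\leq d+1$.

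\textbf{Step 2: the boundary sum envelope.} Lemma~\ref{lem:additive-compression} already gives polynomiality, but with the crude bound $2T^{\ell+1}+1$ per coordinate. To get the stated exponent I would use the sharper observation that the aggregate coordinate $B_\ell(S)=\sum_{t\in S}t^\ell$ is a nonnegative integer in $[0,T^{\ell+1}]$. I therefore define
\[
\widehat{\mathcal A}^{\mathrm{poly}}:=\Bigl\{\textstyle\sum_{\ell=0}^d\gamma_\ell\beta_\ell:\ \beta_\ell\in\{0,\ldots,T^{\ell+1}\}\Bigr\}\cap[0,U^\Sigma].
\]
This set contains every $\mathcal A(J)$. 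Its cardinality is at most $\prod_{\ell=0}^d(T^{\ell+1}+1)=O\bigl(T^{(d+1)(d+2)/2}\bigr)$, since $\sum_{\ell=0}^d(\ell+1)=(d+1)(d+2)/2$.

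\textbf{Step 3: the grid and the dynamic program.} Exactly as in Lemma~\ref{lem:grid-compression}, I combine the envelope with the $T+1$ anchors $D_j$ and the two signs. This yields $H^{\mathrm{poly}}\supseteq G$ with $|H^{\mathrm{poly}}|=O\bigl(T^{1+(d+1)(d+2)/2}\bigr)$. It is generated by enumerating the coordinate vectors in polynomial time. By Proposition~\ref{prop:grid-enlargement} the GDP over $H^{\mathrm{poly}}$ is exact. The common grid variant then costs $O(N_H\log N_H+TN_H)$, which is $O\bigl(T^{2+(d+1)(d+2)/2}\bigr)$ because the log term is dominated by $T$.

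\textbf{Main obstacle.} The main delicate point is the bookkeeping in Step 2. Applying the generic GPT bound literally, with $|g_\ell|\leq T^{c_\ell}$ and signed coordinate ranges, gives $\prod_\ell(2T^{\ell+1}+1)$. This has the same polynomial order, but one must check that the constant factor $2^{d+1}$ is fixed, which it is because $d$ is fixed. One must also check that taking $c_\ell=\ell$ rather than $\ell+1$ is what produces the exponent $(d+1)(d+2)/2$. The remaining parts are direct invocations of earlier results.
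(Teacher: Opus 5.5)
Your proposal is correct and follows essentially the paper's argument. The power-sum coordinates $B_\ell(S)=\sum_{t\in S}t^\ell\in[0,T^{\ell+1}]$ give $O(T^{(d+1)(d+2)/2})$ coordinate vectors, and the $T+1$ anchors with two signs give $|H^{\mathrm{poly}}|=O(T^{1+(d+1)(d+2)/2})$; the common-grid GDP of Proposition~\ref{prop:grid-dp-complexity} then contributes the extra factor $T$, while your care with nonnegative coordinate ranges, $c_\ell=\ell$, and the sorting term only makes explicit what the paper leaves implicit.
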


For example, an affine capacity profile $C_t=\gamma_0+\gamma_1t$ has additive dimension at most two. Its subset sums are determined by the two coordinates $|S|$ and $\sum_{t\in S}t$.

\subsection{Seasonal and periodic capacities}
\label{subsec:seasonal-capacities}

Seasonal capacity patterns arise naturally when production availability repeats according to shifts, days of the week, weeks, or other fixed operating cycles.

Suppose that there exists a fixed integer $q\geq1$ and fixed values $c_1,\ldots,c_q$ such that
\begin{equation}
C_t
=
c_{1+((t-1)\bmod q)},
\qquad
t\in\mathcal{T}.
\label{eq:periodic-capacity}
\end{equation}
The capacity profile contains at most $q$ distinct values. Since $q$ is fixed independently of $T$, the result is an immediate consequence of Corollary~\ref{cor:fixed-capacity-levels}.

\begin{corollary} 
\label{cor:seasonal-capacity}
For every fixed period $q$, the capacitated lot-sizing problem with capacity profile~\eqref{eq:periodic-capacity} is solvable in polynomial time by the GDP algorithm.
\end{corollary}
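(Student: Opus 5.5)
The plan is to reduce Corollary~\ref{cor:seasonal-capacity} to Corollary~\ref{cor:fixed-capacity-levels}, exactly as the text preceding the statement suggests. The reduction rests on one observation about the profile~\eqref{eq:periodic-capacity}: for every $t\in\mathcal{T}$, the index $1+((t-1)\bmod q)$ lies in $\{1,\ldots,q\}$. Hence $C_t\in\{c_1,\ldots,c_q\}$ for all $t$.

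Set $q_r:=c_r$ for $r=1,\ldots,q$; if some of the $c_r$ coincide, merge them, which gives $R\le q$ distinct levels. Since $q$ is fixed independently of $T$, so is $R$. The instance is a capacitated model with $L_t=0$ and $U_t=C_t$, so it lies in the family covered by Corollary~\ref{cor:fixed-capacity-levels}. That result says the order in which the levels occur is irrelevant, so the periodic arrangement needs no special treatment.

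To make the additive-dimension route explicit, I will use the count coordinates $\mu_r(S)$. They satisfy $0\le\mu_r(S)\le n_r\le T$, and they give $\operatorname{adim}(\mathcal{B})\le R\le q$. The basis values $c_r$ come straight from the input. The coordinate maps can be computed in polynomial time, since $g_r(t,C_t)=1$ exactly when $1+((t-1)\bmod q)=r$, and $g_r(t,0)=0$. So Definition~\ref{def:additive-representation} holds, and Theorem~\ref{thm:general-polynomiality} applies. Concretely, the envelope has $|H^{R}|=O(T^{q+1})$ states, and Proposition~\ref{prop:grid-dp-complexity} then gives running time $O(T^{q+2})$, which is polynomial for fixed $q$.

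I do not expect a real obstacle here. The only step that needs care is checking that the representation meets the requirements of Definition~\ref{def:additive-representation}: the exponents $c_i$ must be independent of $T$, and all quantities must be computable in polynomial time. Both follow from $q$ being fixed and from the explicit modular formula for the coordinates.
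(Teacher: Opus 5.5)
Your proposal is correct and follows the paper's own argument: the paper's proof is exactly the reduction to Corollary~\ref{cor:fixed-capacity-levels}, using $C_t\in\{c_1,\ldots,c_q\}$ with $R\le q$ fixed. Your extra check of Definition~\ref{def:additive-representation} with count coordinates (exponents $c_i=0$) is sound, with one small fix: once you merge equal values $c_r$, set $g_r(t,C_t)=1$ iff $C_t=q_r$, not iff the residue index equals $r$ (alternatively, skip the merging and index the coordinates by the $q$ residue classes directly).
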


If the $q$ positions of the seasonal cycle contain $R\leq q$ distinct capacity values, the common grid can be chosen with cardinality $O(T^{R+1})$,  and the resulting running time is $O(T^{R+2})$.

\subsection{Piecewise polynomial capacities}
\label{subsec:piecewise polynomial-capacities}

The preceding polynomial capacity result can be extended to profiles that follow different polynomial laws over different parts of the planning horizon.

Let $\mathcal{T}_1,\ldots,\mathcal{T}_R$ form a partition of $\mathcal{T}$ into $R$ consecutive pieces, where $R$ is fixed independently of $T$. Suppose that, for every
$t\in\mathcal{T}_r$,
\begin{equation}
C_t
=
\sum_{\ell=0}^{d}
\gamma_{r,\ell}t^\ell,
\qquad
r=1,\ldots,R,
\label{eq:piecewise polynomial capacity}
\end{equation}
where the degree $d$ is also fixed.

For every subset $S\subseteq\mathcal{T}$, define $B_{r,\ell}(S)
:=
\sum_{t\in S\cap\mathcal{T}_r}
t^\ell$ for $r=1,\ldots,R$ and  $\ell=0,\ldots,d$. Then
\begin{equation*}
\sum_{t\in S}C_t
=
\sum_{r=1}^{R}
\sum_{\ell=0}^{d}
\gamma_{r,\ell}B_{r,\ell}(S), 
\end{equation*}
and each coordinate satisfies $0\leq B_{r,\ell}(S)\leq T^{\ell+1}$. Since $R$ and $d$ are fixed, these coordinates are polynomial-time computable and have polynomially bounded encoding length. The profile therefore admits an additive representation with at most $R(d+1)$ coordinates, and $\operatorname{adim}(\mathcal{B})
\leq
R(d+1)$.  

The number of aggregate coordinate vectors is bounded by $O
\left(
T^{
R\frac{(d+1)(d+2)}{2}
}
\right)$. Consequently, the common cumulative production grid satisfies
\begin{equation*}
|H^{\mathrm{pw}}|
=
O
\left(
T^{
1+
R\frac{(d+1)(d+2)}{2}
}
\right). 
\end{equation*}

\begin{corollary} 
\label{cor:piecewise polynomial}
If both the number $R$ of pieces and the maximum polynomial degree
$d$ are fixed independently of $T$, then the capacitated lot-sizing
problem with capacity profile~\eqref{eq:piecewise polynomial capacity}
is solvable by the GDP algorithm in $O
\left(
T^{
2+
R\frac{(d+1)(d+2)}{2}
}
\right)$ time.
\end{corollary}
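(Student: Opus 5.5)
The plan is to treat Corollary~\ref{cor:piecewise polynomial} as a direct instance of Theorem~\ref{thm:general-polynomiality} combined with Proposition~\ref{prop:grid-dp-complexity}, in the same way as for Corollary~\ref{cor:polynomial capacity}. First I will write the additive representation of Definition~\ref{def:additive-representation} explicitly. There are $p=R(d+1)$ coordinates indexed by $(r,\ell)$, with basis values $\theta_{r,\ell}=\gamma_{r,\ell}$. The coordinate map is $g_{r,\ell}(t,C_t)=t^\ell$ if $t\in\mathcal{T}_r$ and $0$ otherwise, together with $g_{r,\ell}(t,0)=0$. Since $L_t=0$, we have $\mathcal{B}_t=\{0,C_t\}$, so identity~\eqref{eq:additive-representation} holds, and $|g_{r,\ell}|\le T^{\ell}$, i.e.\ $c_{r,\ell}=\ell$. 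The $\gamma_{r,\ell}$ are given in the input (assumed rational), and the $t^\ell$ have $O(d\log T)$ bits. Hence the encoding requirements hold because $R$ and $d$ are fixed.

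Next, I will bound the envelope more tightly than the generic bound of Lemma~\ref{lem:additive-compression}. For a subset $S$, each aggregate coordinate $B_{r,\ell}(S)$ is a nonnegative integer in $[0,T^{\ell+1}]$; it is actually at most $|\mathcal{T}_r|\,T^\ell$, but $T^{\ell+1}$ suffices. Nonnegativity removes the factor $2$ from the generic count. This gives at most $\prod_{r=1}^{R}\prod_{\ell=0}^{d}(T^{\ell+1}+1)=O\bigl(T^{R(d+1)(d+2)/2}\bigr)$ coordinate vectors, using $\sum_{\ell=0}^d(\ell+1)=(d+1)(d+2)/2$. Every boundary sum $\sum_{t\in S}C_t$ therefore lies in the set $\widehat{\mathcal{A}}$ obtained by evaluating $\sum_{r,\ell}\gamma_{r,\ell}\beta_{r,\ell}$ over these vectors, intersected with $[0,U^\Sigma]$. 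Then, exactly as in Lemma~\ref{lem:grid-compression}, I will form $H^{\mathrm{pw}}=\{D_j\pm a\}\cap[0,D_T]$. Its size is $O(T\cdot T^{R(d+1)(d+2)/2})$, and it contains $G$, so it is a grid envelope by Definition~\ref{def:grid-envelope}.

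Finally, Proposition~\ref{prop:grid-enlargement} gives correctness of GDP on $H^{\mathrm{pw}}\cap[D_t,D_T]$. By Proposition~\ref{prop:grid-dp-complexity}, the common-grid implementation then runs in $O(|H^{\mathrm{pw}}|\log|H^{\mathrm{pw}}|+T|H^{\mathrm{pw}}|)$. The log term is dominated because $\log|H^{\mathrm{pw}}|=O(\log T)\le T$. This yields the claimed $O\bigl(T^{2+R(d+1)(d+2)/2}\bigr)$.

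The only delicate step is keeping the exponent exact rather than loose. This requires two things. First, the coordinate ranges $T^{\ell+1}$ must be used, not a uniform $T^{d+1}$. Second, generating the envelope must not exceed the DP time. Enumerating the vectors and evaluating each linear form takes $O(Rd)=O(1)$ operations per vector, so generation costs $O(|\widehat{\mathcal{A}}|)$. Sorting is absorbed as noted above. I expect no genuine obstacle beyond this bookkeeping. Arithmetic is counted in unit operations, consistent with the other corollaries, and with polynomial bit-length by the encoding remark.
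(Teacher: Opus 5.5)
Your proposal is correct and follows essentially the same route as the paper: per-piece power-sum coordinates $B_{r,\ell}(S)\in[0,T^{\ell+1}]$, a count of $O\bigl(T^{R(d+1)(d+2)/2}\bigr)$ coordinate vectors, $T+1$ anchors with two signs, and the $O(T|H^{\mathrm{pw}}|)$ common-grid GDP bound with sorting absorbed. Your ``tighter'' count only saves a constant factor, since the generic bound of Lemma~\ref{lem:additive-compression} with $c_{r,\ell}=\ell$ already gives $\sum_{r,\ell}(c_{r,\ell}+1)=R(d+1)(d+2)/2$, which is the same exponent.
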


This family includes, for example, capacity profiles with successive
production phases, seasonal regimes, planned expansions, temporary
reductions, or different polynomial deterioration patterns over
different parts of the horizon.

\begin{table}[H]
\centering
\scriptsize
\renewcommand{\arraystretch}{1.25}
\begin{tabularx}{\textwidth}{
>{\raggedright\arraybackslash}p{0.15\textwidth}
>{\raggedright\arraybackslash}p{0.31\textwidth}
>{\raggedright\arraybackslash}p{0.24\textwidth}
>{\raggedright\arraybackslash}X}
\toprule
Family
&
Representative previous results
&
Grid Theory
&
Interpretation
\\
\midrule

Uncapacitated
&
Wagner and Whitin \cite{WagnerWhitin1958}:
$O(T^2)$ dynamic programming.
&
$|H|=O(T)$ and $O(T^2)$ GDP.
&
Recovers the classical quadratic order through cumulative demand
anchors.
\\\midrule

Arbitrary capacities
&
General cases are NP-hard
\cite{FlorianLenstraRinnooyKan1980,BitranYanasse1982}.
K{\i}rca \cite{Kirca1990} constructs exact cumulative production
state sets, whose worst-case size need not be polynomial.
&
The MGT gives an exact additive grid, which may be exponential.
&
The MGT is an optimality theorem; a separate compression
condition is needed for polynomiality.
\\\midrule

Constant capacity
&
Florian and Klein \cite{FlorianKlein1971}: $O(T^4)$ under
concave costs.
Van Hoesel and Wagelmans \cite{vanHoeselWagelmans1996}:
$O(T^3)$ with concave production and linear holding costs.
&
Linear costs:
$|H|=O(T^2)$,
$O(T^3)$ GDP.

Concave costs:
generic $O(T^5)$.
&
Recovers the classical cubic order in the linear cost model through
one additive coordinate.
\\\midrule

Constant MOQ
&
Okhrin and Richter \cite{OkhrinRichter2011b}:
$O(T^2)$ for a special linear uncapacitated MOQ model.
&
Linear costs:
$|H|=O(T^2)$,
$O(T^3)$ GDP.
&
Polynomiality is known; Grid Theory identifies the lower boundary as
the resource generating the cumulative state space.
\\\midrule

Constant capacity
+ constant MOQ
&
Okhrin and Richter \cite{OkhrinRichter2011a}: $O(T^3)$ under
restricted costs.

Hellion et al.
\cite{HellionMangionePenz2012,HellionMangionePenz2013}:
$O(T^6)$ under concave costs.

Koca et al. \cite{KocaYamanAkturk2014}: $O(T^6)$.

Li and Li \cite{LiLi2016}: generic $O(T^7)$.
&
Linear costs:
$|H|=O(T^3)$,
$O(T^4)$ GDP.

Concave costs:
generic $O(T^7)$.
&
Not a new polynomial family. Grid Theory gives a common
two-coordinate explanation through $kL+\ell C$.
\\\midrule

$R$ capacity levels
&
Li and Li \cite{LiLi2016}:
$O(T^{2R+3})$.

Ahmed et al. \cite{AhmedEtAl2016}:
$O(T^{4R+7})$ in the broader two-row
minimum concave cost flow model.
&
Linear costs:
$|H|=O(T^{R+1})$,
$O(T^{R+2})$ GDP.

Concave costs:
$O(T^{2R+3})$.
&
The capacity levels become additive count coordinates. The
linear cost GDP gives a sharper exponent than the generic
frameworks.
\\\midrule

Nondecreasing capacities
with structured costs
&
Chung and Lin \cite{ChungLin1988}:
$O(T^2)$ under nonincreasing setup and unit production costs.
&
No polynomial grid guarantee for an arbitrary nondecreasing
capacity sequence.
&
An important example of polynomiality arising through cost
dominance rather than bounded additive complexity.
\\\midrule

Fixed-degree polynomial
capacities
&
No direct counterpart identified in the lot-sizing literature reviewed for this paper.
&
$\displaystyle
|H|=
O\!\left(
T^{1+\frac{(d+1)(d+2)}{2}}
\right)$,

$\displaystyle
O\!\left(
T^{2+\frac{(d+1)(d+2)}{2}}
\right)$
GDP.
&
The number of distinct capacities may grow with $T$, but their
subset sums have fixed-dimensional power sum representations.
\\\midrule

Piecewise polynomial
capacities
&
No direct counterpart identified in the literature reviewed here.
&
$\displaystyle
O\!\left(
T^{2+
R\frac{(d+1)(d+2)}{2}}
\right)$
GDP for fixed $R,d$.
&
Extends additive compression beyond fixed finite collections of
capacity values.
\\

\bottomrule
\end{tabularx}
\caption{Representative complexity results and their relation to Grid Theory.}
\label{tab:literature-comparison}
\end{table}

We emphasize that piecewise polynomial capacities refer here to
the dependence of the exogenous capacity $C_t$ on the time index $t$.
This should not be confused with the piecewise concave
\emph{production cost} functions studied, for example, by Koca,
Yaman, and Akturk \cite{KocaYamanAkturk2014}.

\subsection{Relation to previous complexity results}
\label{subsec:literature-comparison}

The preceding results include both classical polynomial families and
new consequences of the additive grid framework. It is therefore
useful to distinguish between recovering polynomiality through a
common structural argument and improving the best specialized
algorithm for a particular model. Table~\ref{tab:literature-comparison}
summarizes representative complexity results from the literature and
their relation to Grid Theory.

Table~\ref{tab:literature-comparison} also emphasizes that the
polynomial grid condition is sufficient rather than necessary.
The Chung--Lin family is particularly instructive: polynomiality can
result from cost-induced dominance even when the additive
capacity generated grid itself is not guaranteed to have polynomial
cardinality.

\section{Discussion}
\label{sec:discussion}

The results of this paper concern  dynamic lot-sizing with exogenous lower and upper production bounds, but their interpretation is broader than capacity alone. The structural mechanism identified by the Grid Theory rests on three ingredients: regeneration anchors, a boundary structure of extreme solutions, and additive compression of the boundary values that generate cumulative production states.

\subsection{What the framework explains}
\label{sec:explains}

The polynomial families of Section \ref{sec:polynomial-families} show that what matters is not merely the number of periods or the number of distinct boundary values, but the complexity of their additive combinations. This distinction is particularly visible for fixed-degree polynomial capacities. Bounded additive dimension is strictly more flexible than a description based only on a fixed finite set of capacity levels.

The proposed framework also separates two questions that are often merged. The Main Grid Theorem is an optimality-preserving discretization result, while the General Polynomiality Theorem is a compression result. The exponential grid example of Section \ref{subsec:exponential-grid} shows why the distinction is necessary.

\subsection{What the framework does not imply}
\label{sec:limitations}

Bounded additive dimension is a sufficient condition for polynomial solvability, not a characterization of all polynomially solvable lot-sizing families. An exact additive grid may be exponential even though a dominance argument or additional cost structure may yield a polynomial algorithm. Also, exponential cardinality of the additive grid does not imply NP-Hardness. It only shows that explicit enumeration of the grid, by itself, is not a polynomial solution method.

The present results also rely mainly on the boundary values being exogenous input data. If the admissible production boundaries depend on additional operational decisions, the grid can no longer necessarily be constructed independently of the state trajectory. For instance, dynamic capacity adjustment is an example of this situation. In such models, polynomiality would depend jointly on the complexity of the state-transition system generating the boundaries and on the additive complexity of the boundary values themselves.

For the same reason, additional resources such as energy consumption, carbon emissions, or budgets are not covered simply because they are additive. If such resources introduce new state variables or coupling constraints, a new structural theorem is required before a grid argument can be applied. In particular, one would need an analogue of the Main Grid Theorem in the enlarged state space, not just a low-dimensional representation of the resource coefficients.

\subsection{Directions suggested by the framework}
\label{sec:directions}

The present theory suggests three natural extensions.

First, one may study \textit{multidimensional grid representations}. If production decisions generate several cumulative resources and each admits a bounded-dimensional additive representation, then the combined resource vector may still have polynomially many aggregate descriptions when the number of resources is fixed. This observation alone is not sufficient for a complexity result, but it identifies the relevant compression mechanism that a multidimensional MGT would have to exploit.

Second, one may consider \textit{endogenous boundary systems}. If an operational state selects among several production limits, the appropriate question is whether the state process itself has a polynomially representable transition structure and whether the boundary values generated along that process retain low additive complexity. This would extend Grid Theory from exogenous resource profiles to resource profiles generated dynamically by operational decisions.

Last, one may investigate compression mechanisms beyond additive dimension. The present framework provides a sufficient condition. Other algebraic relations, dominance properties, cost-induced pruning rules, or alternative state representations may produce polynomial optimality preserving sets even when the complete additive envelope is large. Understanding how such mechanisms interact with the MGT could substantially enlarge the class of problems explained by the framework.

These extensions have to be viewed as directions for further theory rather than consequences of the results developed here. The contribution of the present paper is more precise: for exogenous production boundaries, additive structure provides a general explanation of when the cumulative production state space induced by the MGT is polynomially representable.

\section{Conclusion}
\label{sec:conclusion}

This paper develops Grid Theory as a structural framework for dynamic lot-sizing with lower and upper production boundaries. Its starting point is the observation that the geometry of optimal extreme solutions restrict the production quantities that need to be considered inside regeneration  intervals.

The first result establishes that every regeneration interval of a relevant extreme solution contains at most one partially utilized production period. The Main Grid Theorem then translates this local boundary structure into a global cumulative production statement: there exists an optimal trajectory supported on forward and backward additive grids generated from cumulative-demand anchors and production-boundary values.

This finite-state representation alone does not imply polynomiality. The exact additive grid may be exponentially large. The central complexity contribution is therefore the distinction between \textit{grid existence} and \textit{grid compressibility}. The notion of additive dimension captures one sufficient compression mechanism. When the boundary resource profile admits a bounded-dimensional, polynomially accessible additive representation, the General Polynomiality Theorem yields a polynomial-size grid envelope and hence an exact polynomial-time GDP algorithm.

The framework recovers several classical polynomial families and places them under a common structural explanation, including constant capacities, constant minimum order quantities, and a fixed number of capacity levels. More importantly, it also applies to profiles whose number of distinct capacity values grows with the horizon, such as fixed-degree polynomial and fixed-piecewise-polynomial capacity sequences. These examples illustrate that tractability is managed not by the number of distinct resource values itself, but by the complexity of their additive aggregate representation.

Grid Theory is not to be viewed as a necessary characterization of polynomial lot-sizing problems, nor as a replacement for specialized algorithms that exploit stronger model-specific structure. Its purpose is to isolate a reusable mechanism: some continuous dynamic lot-sizing models are polynomial because at least one optimal cumulative production trajectory can be represented on a polynomially constructible additive state space.

\bibliographystyle{plainurl} 
 
\bibliography{biblio}

\end{document}